\documentclass[BCOR=12mm, DIV=default,twoside, pagesize, draft]{scrartcl}

\addtokomafont{caption}{\small}
\setkomafont{captionlabel}{\sffamily\bfseries}
 
\usepackage{tikz}     
\usetikzlibrary{arrows,automata}

\usepackage{amsmath, amssymb, amsthm} 
\usepackage[utf8x]{inputenc}
\usepackage{libertine} 
\usepackage[T1]{fontenc}
\usepackage[english]{babel} 
\usepackage{url}  % f o r XeTex t o b r e a k l o n g URLs a t l i n e e n d i n g
\usepackage{faktor,csquotes,color} 
\usepackage{pgfplots} 
\usepackage{enumitem}

       %% added by KL 2018-12-19 and later
		\usepackage{float} 
		\usetikzlibrary{arrows}
		\usetikzlibrary{datavisualization.formats.functions}
		\pgfplotsset{compat=1.12}
		\usepgfplotslibrary{fillbetween}
		\usetikzlibrary{patterns}

\usepackage{changes}

\definechangesauthor[name={Kristoffer}, color=blue]{kri}
\newcommand{\stkout}[1]{\ifmmode\text{\sout{\ensuremath{#1}}}\else\sout{#1}\fi}
\setdeletedmarkup{\stkout{#1}}

\usepackage{units,varioref}

\setlength   {\parindent}      {0cm}
\setlength   {\parskip}        {\smallskipamount}
\usepackage[shortcuts]{extdash}
\usepackage{setspace,scrpage2}
\usepackage{mathtools}
\mathtoolsset{showonlyrefs,showmanualtags}
\numberwithin{equation}{section}

%\onehalfspacing

\allowdisplaybreaks 
\clearscrheadfoot
\automark[section]{section}
\ohead{\headmark}
\ofoot[\pagemark]{\pagemark}

%Theorem-Umgebungen

\newtheoremstyle{break}{\topsep}{\topsep}{\itshape}{}{\bfseries}{.}{\newline}{}
\newtheoremstyle{exampl}{\topsep}{\topsep}{\upshape}{}{\bfseries}{.}{\newline}{}

\theoremstyle{plain}% default
\newtheorem{thm}{Theorem}[section]
\newtheorem{lem}[thm]{Lemma}  
  
\newtheorem{cor}[thm]{Corollary}

\theoremstyle{break}% default

\theoremstyle{definition}
\newtheorem{defi}[thm]{Definition}

\theoremstyle{exampl}

\theoremstyle{remark}
\newtheorem{rem}[thm]{Remark}

%Abkurzungen

% spaces

\DeclareMathOperator{\E}{\mathbb E}

\setlength{\emergencystretch}{0.5em}

\definecolor{mygray}{gray}{.5}

\title{Optimal dividends and capital injection under dividend restrictions}
\author{
Kristoffer Lindensj\"o\footnote{Department of Mathematics, Stockholm University, Sweden. kristoffer.lindensjo@math.su.se.} 
\and Filip Lindskog\footnote{Department of Mathematics, Stockholm University, Sweden. lindskog@math.su.se.}}

\begin{document}

\maketitle

\begin{abstract}
We study a singular stochastic control problem faced by the owner of an insurance company that dynamically pays dividends and raises capital in the presence of the restriction that the surplus process must be above a given 
\emph{dividend payout barrier}  in order for dividend payments to be allowed. 
Bankruptcy occurs if the surplus process becomes negative and there are proportional costs for capital injection. 
We show that one of the following strategies is optimal: 
(i) Pay dividends and inject capital in order to reflect the surplus process at an upper barrier and at $0$, implying bankruptcy never occurs.
(ii) Pay dividends in order to reflect the surplus process at an upper barrier and never inject capital --- corresponding to absorption at $0$ --- implying bankruptcy occurs the first time the surplus reaches zero.
We show that if the costs of capital injection are \emph{low}, then a sufficiently high dividend payout barrier will change the optimal strategy 
from type (i) (without bankruptcy) 
to type (ii) (with bankruptcy).  
Moreover, if the costs are \emph{high}, then the optimal strategy is of type (ii) regardless of the dividend payout barrier. 
The uncontrolled surplus process is a Wiener process with drift. 
\end{abstract}

\noindent \textbf{Keywords:} 
bankruptcy; 
capital injection; 
dividend restrictions;  
insolvency; 
issuance of equity; 
optimal dividends; 
reflection and absorption; 
singular stochastic control; 
solvency constraints.\\

\noindent \textbf{AMS MSC2010:} 49J15; 49N90; 93E20; 91B30; 91G80; 97M30.

\section{Introduction} \label{intro}
Insurance risk was originally studied in terms of ruin probability. % 
However, this approach may underestimate risk since insurance companies are realistically more interested in maximizing company value than minimizing risk and an alternative approach is therefore to study optimal dividend policies 
--- in the sense of maximizing the expected value of the sum of discounted future dividend payments --- 
as suggested by De Finetti in the 1950s. 
A vast literature on various versions of the classical optimal dividend problem has since emerged. Some of the more common modeling choices regard for example:
\begin{itemize}
\item The dynamics of the (uncontrolled) surplus process; e.g. the classical Cramér-Lundberg model, the dual model, or an It\^{o} diffusion model.
\item If capital injection is allowed or not.
\item If bankruptcy is allowed or not. Bankruptcy not being allowed corresponds to obligatory capital injection to avoid bankruptcy.
\item If the insurance company is regulated; e.g. by capital requirements or dividend restrictions.
\item Different kinds of market frictions; e.g. fixed or proportional costs for capital injection.
\item Additional decision variables for the insurance company; e.g. reinsurance level or investment policy.
\end{itemize}
Corporations in financial and insurance markets in the real world typically have the possibility of both going bankrupt and raising equity capital from its owners (capital injection). One of the first papers to take both of these characteristics into account simultaneously is \cite{lokka2008optimal} which studies a singular stochastic control problem corresponding to the optimal dividend problem with the possibility of both capital injection and bankruptcy, under the assumption that the uncontrolled surplus process is a Wiener process with drift. The authors find that depending on the parameters of the model it is either optimal to pay dividends in order to reflect the surplus process at an upper barrier and never inject capital, or to pay dividends and inject capital in order to reflect the surplus process at an upper barrier and at $0$.

However, corporations in financial and insurance markets are also regulated. In order to take this characteristic into account \cite{paulsen2003optimal} studies the optimal dividend problem in a model with solvency constraints, meaning that it is not allowed to pay dividends unless the surplus process exceeds a given constant --- in the present paper called \emph{dividend payout barrier}. Capital injection is not considered. The author finds that it is optimal to use a reflection strategy with the barrier being the maximum of the dividend payout barrier and the reflection barrier that would have been optimal without regulation. The uncontrolled surplus process is a general It\^{o} diffusion.

The main contribution and objective of the present paper is to consider all three mentioned characteristics by studying a singular stochastic control problem which allows capital injection as well as bankruptcy under regulation of the dividend payout barrier type. 

Section \ref{prev-lit} contains a survey of some of the related literature. 
In Section \ref{setup} we formulate the main problem. 
In Section \ref{restrict} we formulate and solve two problems which are intimately connected to the main problem. 
In Section \ref{sec:solution} we use the results of Section \ref{restrict} to solve the main problem; 
the main result is Theorem \ref{mainTHM}.  Section \ref{graphs-sec} contains graphical illustrations. Section \ref{sec-conclusions} contains conclusions and ideas for future research.

\subsection{Previous literature} \label{prev-lit}
This section contains a brief survey of some of the literature on the optimal dividend problem related to the present paper. Further comparisons of the present paper to some of the references are made in the sections below.  
More complete surveys are \cite{albrecher2009optimality,avanzi2009strategies,taksar2000optimal}; see also \cite{alvarez2018class}.

The results of \cite{lokka2008optimal} (see the previous section) are in \cite{zhu2016optimal} extended to a general It\^{o} diffusion model with a growth restriction for the drift function. The results of \cite{paulsen2003optimal} (see the previous section) are in \cite{he2008optimal} extended by the introduction of the possibility of reinsurance.

In \cite{he2009optimal} both fixed and proportional transaction costs for capital injection, as well as reinsurance and bankruptcy are considered; the underlying model is a Wiener process with drift and there is no regulation. 
Other papers studying different models with capital injection and bankruptcy without regulation are 
\cite{avanzi2011optimal,dai2010optimal,zhu2016optimal}. %
In \cite{zhou2012optimal} proportional reinsurance and a maximum dividend rate restriction are studied in a particular diffusion model; capital injection and the possibility of bankruptcy are studied separately.

In \cite{kulenko2008optimal} the classical Cramér–Lundberg model without bankruptcy is studied. 
A similar model with solvency constraints is studied in \cite{zhang2010optimal}. 
In \cite{avram2007optimal} a spectrally negative Lévy process model without bankruptcy is considered.
Other papers studying different models without bankruptcy are \cite{paulsen2008optimal,peng2012optimal,schmidli2017capital,sethi2002optimal,yao2011optimal}.

In \cite{bai2012optimal} an It\^{o} diffusion model with fixed transaction costs and solvency constraints without capital injection is studied. 
In \cite{de2017dividend,grandits2013optimal} %  
the dividend problem without capital injection is considered for the Wiener process with drift and a finite time horizon.

In \cite{chen2016optimal} an optimal dividend problem given time-inconsistent preferences is studied using the game-theoretic approach to time-inconsistent stochastic control (general references for this approach include \cite{tomas-continpubl,christensen2017finding,christensen2018time,lindensjo2017timeinconHJB}).

\section{Problem formulation and preliminaries} \label{setup}
Consider a filtered probability space $(\Omega,{\cal F},\mathbb{P},\underline{\mathcal{F}})$ satisfying the usual conditions and supporting a Wiener process $W$. The controlled surplus process $X$ of an insurance company is given by
\begin{align}
X _t & = x + \mu t + \sigma W_t+C_t - D_t, \label{X-SDE} 
 \end{align}
where the process $D$ corresponds to accumulated dividends to the owner of the insurance company and the process $C$ corresponds to accumulated capital injection from the owner. The initial surplus satisfies $x\geq 0$ and the parameters satisfy $\mu>0$ and $\sigma>0$.
We suppose that for a given financing strategy $(C,D)$ the value of the insurance company is the expected value 
of the sum of the discounted future cash flow to the owner and that the owner wants to maximize the value of the insurance company. In mathematical terms we thus % 
consider the singular stochastic control problem
\begin{align}   
V(x;b_r) &:= \sup_{(C,D)\in \mathcal A(x,b_r)}\mathbb{E}_x\left( 
\limsup_{t\rightarrow \infty}\left(
\int_0^{\tau \wedge t}e^{-\alpha s}dD_s
-k\int_0^{\tau \wedge t}e^{-\alpha s}dC_s 
\right)\right), \label{the-problem}\\
\tau  &:= \inf\{t\geq 0: X_t  <0\}, \label{tau-D}
\end{align}
where we interpret
$k>1$ as a proportional cost of injecting capital (equity issuance costs), 
$\alpha>0$ as a discount factor, 
$\tau$ as the random bankruptcy time 
and where $\mathcal A(x,b_r)$ is the set of admissible strategies:

\begin{defi}\label{admissible} For a given initial surplus $x\geq 0$ and \emph{dividend payout barrier} $b_r\geq 0$ a pair $(C,D)$ is said to be an \emph{admissible} strategy if $C$ and $D$ are non-decreasing LCRL $\underline{\mathcal{F}}$-adapted processes with $C_0=D_0=0$ satisfying the \emph{dividend payout condition}
\begin{align}   
\int_0^\tau I_{\{X_t <b_r\}}dD_t = 0 \enskip  \mbox{a.s.}\label{Admiss1} 
\end{align}
\end{defi} 

The main objective of the present paper is to study problem \eqref{the-problem}. This problem has according to the authors' knowledge not been studied before.

Obviously the parameters of the model are such that either condition \eqref{lemma1:newcondition-k} below holds, or 
\eqref{lemma1:newcondition-k} holds with reversed inequality, which is interpreted as proportional costs of capital injection $k$ being 
\emph{low} or \emph{high}, respectively. 
In Theorem \ref{mainTHM} we will see that which is the case determines the kind of solution problem \eqref{the-problem} has. 
\begin{align}
k & \leq \frac
{ r_1-r_2
}{ 
  r_1 \left(\frac{r_2^2}{r_1^2}\right)^{\frac{r_1}{r_1-r_2}}
- r_2 \left(\frac{r_2^2}{r_1^2}\right)^{\frac{r_2}{r_1-r_2}}
} \label{lemma1:newcondition-k},\\
& \mbox{where } \enskip r_1 := -\frac{\mu}{\sigma^2} +\sqrt{\frac{\mu^2}{\sigma^4}+\frac{2\alpha}{\sigma^2}}, 
\enskip r_2 := -\frac{\mu}{\sigma^2} -\sqrt{\frac{\mu^2}{\sigma^4}+\frac{2\alpha}{\sigma^2}}. \label{def:r1r2}\\
& \mbox{Note that $r_2< 0 < r_1$ and $r_2^2>r_1^2$.} \label{properties:r1r2}
\end{align}

\begin{rem} \label{D-plus-remark} Formally we write $dD_t=dD^c_t + \Delta D_t$ where $D^c$ denotes the continuous part of $D$ while $\Delta D_t:= D_{t+}-D_t$, for $D_{t+}:= \lim_{h\searrow 0}D_{t+h}$, denotes jumps in $D$. The processes $C$ and $X$ are treated analogously.
\end{rem}
 
\begin{rem} \label{cap-req-rem}  Condition \eqref{Admiss1} implies that if the surplus at time $t$ is smaller than the  dividend payout barrier $b_r$ then dividends are not allowed, i.e. $dD_t = 0$. %
Thus, if $X_{t+}<b_r$ then a potential jump $\Delta X_t$ cannot have been caused by $D$ and must have been caused by $C$ and hence $X_t\leq X_{t+}<b_r$, implying that $dD_t = 0$ also in this case; in mathematical terms this means that \eqref{Admiss1} implies that
\begin{align}   
\int_0^\tau I_{\{X_{t+} <b_r\}}dD_t = 0 \enskip  \mbox{a.s.}\label{Admiss2} 
\end{align}
In other words, the surplus directly \emph{after} a dividend payment cannot be lower than the dividend payout barrier $b_r$ either.
\end{rem}

\begin{rem} An inequality analogous to that of \eqref{lemma1:newcondition-k} is used in \cite{lokka2008optimal} when studying problem \eqref{the-problem} without the presence of a dividend payout barrier, i.e. with $b_r=0$. Regulation of the type \eqref{Admiss1} was first studied in \cite{paulsen2003optimal}.
\end{rem}

\subsection{Preliminaries} \label{Preliminaries}
Here we informally recall well-known results that are used throughout the present paper; cf. e.g. 
\cite{Karatzas2,pilipenko2014introduction,shreve1984optimal}. 
%%%%%
%
%%%% 
% 
%
% 
For any $b>0$ and $x \in [0,b]$ there exists an $\underline{\mathcal{F}}$-adapted non-decreasing continuous processes $\bar{D}^b$ with $\bar{D}^b_0=0$ such that the process $X$ defined by,
\begin{align}
X _t= x + \mu t + \sigma W_t - \bar{D}^b_t, \label{X-SDEref}
\end{align}
is reflected at $b$ and satisfies,
\begin{align}
dX _t = \mu dt + \sigma dW_t, \enskip \mbox{when $X_t<b$},
\end{align}
with $\bar{D}^b$ being constant on any interval where $X_t<b$.
There also exists a pair of $\underline{\mathcal{F}}$-adapted non-decreasing continuous processes $(C^0,D^b)$ with $C^0_0=D^b_0=0$ such that the process $X$ defined by,
\begin{align}
X _t= x + \mu t + \sigma W_t + C^0_t - D^b_t,  \label{X-SDErefref}
\end{align}
is reflected at $b$ and $0$, and satisfies
\begin{align}
dX _t = \mu dt + \sigma dW_t, \enskip \mbox{when $0<X_t<b$},
\end{align}
with $D^b$ being constant on any interval where $X_t<b$ and $C^b$ being constant on any interval where $X_t>0$. 
In the case $x>b$, $\bar{D}^b$ and $D^b$ are defined so that the  corresponding processes in 
\eqref{X-SDEref} and \eqref{X-SDErefref} jump from $x$ to $b$ at $t=0$.

The pair $(C^0,D^b)$ is in the present paper said to be a \emph{double barrier strategy}, 
while $(0,\bar{D}^b)$, or simply $\bar{D}^b$, is said to be an \emph{upper barrier strategy}. For an upper barrier strategy $\bar{D}^b$ and $\tau$ defined in \eqref{tau-D}, the value function 
$x \mapsto \mathbb{E}_x\left(\int_0^{\tau } e^{-\alpha t}d\bar{D}_t^b\right)$ is the unique solution to,
\begin{align}
\alpha f(x) & = \mu f'(x) +\frac{1}{2} \sigma^2 f''(x), \enskip 0 \leq x < b, \label{ODE}\\ 
f(x) & = x-b+ f(b), \enskip x \geq b, \label{ODEcondnodiv0} \\ 
f(0)&=0, \enskip f'(b) = 1, \label{ODEcondnodiv1}
\end{align}
cf. \cite[Lem. 2.1, Cor. 2.2 \& Ex. 1]{shreve1984optimal}. The general solution to the ODE \eqref{ODE} is, for $r_1,r_2$ defined in \eqref{def:r1r2} and constants $c_1,c_2$,
\begin{align} 
f(x) & = c_1 e^{r_1x} + c_2 e^{r_2x}. \label{gen-sol}
\end{align}
The general solution together with \eqref{ODEcondnodiv0} and the boundary conditions \eqref{ODEcondnodiv1}, and simple calculations, yield, for any $b>0$,
\begin{equation}
\mathbb{E}_x\left(\int_0^{\tau } e^{-\alpha t}d\bar{D}_t^b\right)=\begin{cases}
\frac{e^{r_1 x}-e^{r_2 x}}{r_1e^{r_1 b}-r_2e^{r_2 b}}, 					&\ 0 \leq x \leq b,\\
x-b+ \frac{e^{r_1 b}-e^{r_2 b}}{r_1e^{r_1 b}-r_2e^{r_2 b}} ,		& x>b. \label{ref-abs-valF}
	\end{cases}
	\end{equation}

For a double barrier strategy $(C^0,D^b)$ the stopping time in \eqref{tau-D} trivially satisfies $\tau=\infty$ a.s. The value function 
$x \mapsto \mathbb{E}_x\left(
\int_0^{\infty} e^{-\alpha t}dD_t^b
-k\int_0^{\infty} e^{-\alpha t}dC^0_t
\right)$ with $k>1$ is well-defined and is the unique solution to \eqref{ODE} together with \eqref{ODEcondnodiv0} and the boundary conditions,
\begin{align}
f'(0)=k, \enskip f'(b) = 1, \label{ODEcondnobank1}
\end{align}
cf. \cite[Lem. 2.1, Cor. 2.2 \& Ex. 1]{shreve1984optimal}. 
The general solution \eqref{gen-sol} together with \eqref{ODEcondnodiv0} and \eqref{ODEcondnobank1} yield, 
for any $b>0$, 
\begin{align}
\mathbb{E}_x\left(
\int_0^{\infty} e^{-\alpha t}dD_t^b
-k\int_0^{\infty} e^{-\alpha t}dC^0_t
\right) 
\quad\quad\quad\quad\quad\quad\quad\quad\quad\quad\quad\quad\quad\quad 
\end{align}
\begin{equation}
=\begin{cases}
\frac{1}{e^{r_1b}-e^{r_2b}}\left(\frac{1-ke^{r_2b}}{r_1}e^{r_1 x}-\frac{1-ke^{r_1b}}{r_2}e^{r_2x}\right),  &\ 0 \leq x \leq b,\\
x-b+  
\frac{1}{e^{r_1b}-e^{r_2b}}\left(\frac{1-ke^{r_2b}}{r_1}e^{r_1 b}-\frac{1-ke^{r_1b}}{r_2}e^{r_2b}\right),		& x>b.
\label{ref-ref-valF}
\end{cases}
\end{equation}
 
\begin{rem} The process $\bar{D}^b$ can be pathwise defined as $\bar{D}^b_t=\max_{0\leq s\leq t}(x+\mu s+ \sigma W_s-b)_+$ which can be seen using the corresponding Skorohod equation, cf. \cite[Section 3.6 C]{Karatzas2} and \cite{asmussen1997controlled}. 
The pair $(C^0,D^b)$ can be constructed pathwise in a procedure involving iteratively using the solutions to the Skorohod equations for reflection at $b$ and at $0$ in turn, as noted in \cite[p. 960]{lokka2008optimal}.
\end{rem}

\section{Two restricted problems}\label{restrict}
In order to solve problem \eqref{the-problem} it is useful to first study two related problems for which the set of admissible strategies is further restricted.

\subsection{Capital injection not allowed}
Here we consider problem \eqref{the-problem} under the additional restriction that capital injection is not allowed 
--- this problem has been studied in the literature, see \cite{paulsen2003optimal}, and we here only recall the solution. That is, we consider admissible strategies $(C,D) \in \mathcal A(x,b_r)$ for which
\begin{align} 
C_t =0 \enskip \mbox{for all $t \geq 0$ a.s.} \label{no-div-assum}
\end{align}
For this restricted problem we can write, using also that $D$ is non-decreasing, the optimal value function \eqref{the-problem} as,
\begin{align}
%G(x;b_r): = 
\sup_{(0,D)\in \mathcal A(x,b_r)}\mathbb{E}_x\left(\int_0^{\tau } e^{-\alpha t}dD_t\right).\label{the-problem-nodiv}
\end{align}
The solution to this problem is given by:
\begin{thm} \label{paulsen-thm} The upper barrier strategy  $\bar{D}^{b}$ with $b=b_r\vee b^{*}$ is optimal in \eqref{the-problem-nodiv}, where
\begin{align} 
b^*:= \frac{\log\left(r_2^2/r_1^2\right)}{r_1-r_2}>0.\label{b*}
\end{align}
\end{thm}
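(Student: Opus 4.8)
The plan is a classical verification argument: take the value of the candidate strategy as an ansatz, show it dominates the value of every admissible strategy, and observe that the candidate attains it. Write $b := b_r \vee b^*$ and let $V$ denote the right-hand side of \eqref{ref-abs-valF} with this $b$; by construction $V$ is the value of $\bar D^{b}$. First I would explain why $b = b_r \vee b^*$ is the correct barrier. Since the numerator of \eqref{ref-abs-valF} on $[0,b]$ is independent of the barrier, maximizing the value $V_\beta(x):=\mathbb{E}_x(\int_0^\tau e^{-\alpha t}d\bar D^\beta_t)$ over admissible barriers $\beta\ge b_r$ amounts to minimizing $g(\beta) := r_1 e^{r_1\beta} - r_2 e^{r_2\beta}$. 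A direct computation gives $g'(\beta) = r_1^2 e^{r_1\beta} - r_2^2 e^{r_2\beta}$ and $g''(\beta) = r_1^3 e^{r_1\beta} - r_2^3 e^{r_2\beta} > 0$ (using $r_2 < 0 < r_1$ from \eqref{properties:r1r2}), so $g$ is strictly convex with unique minimizer determined by $e^{(r_1-r_2)\beta} = r_2^2/r_1^2$, i.e. $\beta = b^*$ of \eqref{b*}; that $b^*>0$ follows from $r_2^2 > r_1^2$. Hence the constrained minimizer of $g$ on $[b_r,\infty)$ is exactly $b_r \vee b^*$, which fixes the candidate.

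For the verification, introduce the operator $\mathcal{L}f := \mu f' + \tfrac12\sigma^2 f''$ and establish three properties of $V$: (i) $\mathcal{L}V - \alpha V \le 0$ on $(0,\infty)$; (ii) $V'(x) \ge 1$ for $x \ge b_r$; and (iii) $V \ge 0$ with $V(0)=0$. On $(0,b)$ property (i) holds with equality because $V$ solves \eqref{ODE}. On $(b,\infty)$ one has $V(x) = x-b+V(b)$, so $\mathcal{L}V - \alpha V = \mu - \alpha V(b) - \alpha(x-b)$; since $x-b \ge 0$ it suffices that $\mu - \alpha V(b) \le 0$, and evaluating \eqref{ODE} at $b^-$ gives $\mu - \alpha V(b) = -\tfrac12\sigma^2 V''(b^-)$, so (i) reduces to $V''(b^-)\ge 0$. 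Writing $V'(x)=g(x)/g(b)$ on $[0,b]$ gives $V''(b^-)=g'(b)/g(b)$, which is $\ge 0$ because $b \ge b^*$ and $g'\ge 0$ to the right of its minimizer; the same convexity yields (ii), via $g(x)\ge g(b)$ for $x\in[b_r,b^*]$ in the case $b=b^*$ (global minimum) and $V'\equiv 1$ on $[b_r,\infty)$ in the case $b=b_r$. Property (iii) is immediate from \eqref{ref-abs-valF} and the sign of $g$.

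The core estimate then comes from applying a generalized It\^o formula to $e^{-\alpha t}V(X_t)$ along any admissible $(0,D)$, stopped at $\tau\wedge t$. Because $V$ is $C^1$ with bounded, piecewise continuous second derivative (and $V'$ is continuous across $b$, so no local-time term appears even in the constrained case $b_r>b^*$ where $V\notin C^2$), the formula applies; the $dW$-integral is a true martingale since $V'$ is bounded, so it vanishes in expectation. Using (i) for the $dt$-term, the admissibility condition \eqref{Admiss1} with (ii) for the continuous dividend term, and \eqref{Admiss2} (Remark \ref{cap-req-rem}) with the mean value estimate $V(X_s)-V(X_{s+})\ge \Delta D_s$ on the down-jumps of $X$, and discarding the nonnegative boundary term $\mathbb{E}_x[e^{-\alpha(\tau\wedge t)}V(X_{\tau\wedge t})]\ge 0$, I obtain $V(x)\ge \mathbb{E}_x[\int_0^{\tau\wedge t}e^{-\alpha s}dD_s]$; letting $t\to\infty$ by monotone convergence gives $V(x)\ge \mathbb{E}_x[\int_0^\tau e^{-\alpha s}dD_s]$. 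Since $\bar D^{b}$ pays dividends only at the level $b\ge b_r$ it satisfies \eqref{Admiss1} and hence is admissible, and its value is $V$ by \eqref{ref-abs-valF}; the bound is therefore attained and $\bar D^{b_r\vee b^*}$ is optimal in \eqref{the-problem-nodiv}.

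The delicate points are the regularity at the free boundary and the transversality at $\tau$. The regularity issue---that $V$ fails to be $C^2$ at $b$ when $b_r>b^*$---is the genuine subtlety, but it is resolved by noting that $V'$ is continuous there, so the generalized It\^o formula carries no local-time contribution; I expect the bulk of the real work to be the convexity analysis of $g$, which simultaneously delivers the optimal barrier, the supersolution inequality (i), and the gradient bound (ii). The transversality at $\tau$ is sidestepped: because the dividend-payout restriction forces $X_{s+}\ge b_r\ge 0$ after every dividend jump (Remark \ref{cap-req-rem}), bankruptcy is reached only continuously with $X_\tau=0$, and nonnegativity of $V$ lets me drop the boundary term outright rather than establish a sharp limit.
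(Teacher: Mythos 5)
Your proposal is correct and takes essentially the same route as the paper: the paper proves this theorem by invoking the verification argument of Theorem \ref{mainTHM} (generalized It\^{o}/It\^{o}--Tanaka applied to the candidate value function, the supersolution inequality $\mu-\alpha G\leq 0$ above $b$ via the sign of $G''(b^-)$ as in Lemma \ref{lem1}, the gradient bound $G'\geq 1$ from the monotonicity/convexity analysis of $b\mapsto r_1e^{r_1b}-r_2e^{r_2b}$, and the payout condition \eqref{Admiss1} handling the case $b_r>b^{*}$), which is precisely your argument specialized to $C\equiv 0$. Your convexity derivation of $b^{*}$ and the treatment of the $C^1$-but-not-$C^2$ fit at the barrier also match the paper's Lemmas \ref{lemmaVA} and \ref{lem2}, so there is no gap.
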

\begin{proof} 
The result can be proved using arguments similar to those in the proof of Theorem \ref{mainTHM}. The constant in \eqref{b*} is also found in \cite[Eq. (5.6)]{shreve1984optimal} and \cite[Eq. (3.6)]{lokka2008optimal}, see also Remark \ref{prev-lit-rem1}. Note that $b^*>0$ by \eqref{properties:r1r2}. The result also follows from \cite[Theorem 2.2]{paulsen2003optimal}. 
\end{proof}

The value function corresponding to the strategy in Theorem \ref{paulsen-thm} can, using the results of Section \ref{Preliminaries}, be written as,
\begin{equation}
G(x;b_r):=\begin{cases}
\frac{e^{r_1 x}-e^{r_2 x}}{r_1e^{r_1 b}-r_2e^{r_2 b}}, 					&\ 0 \leq x \leq b,\\
x-b+ \frac{e^{r_1 b}-e^{r_2 b}}{r_1e^{r_1 b}-r_2e^{r_2 b}} ,		& x>b, \enskip \mbox{ where } b=b_r\vee b^{*}. \label{G-def}
	\end{cases}
	\end{equation}
We will usually write $G(x)$ instead of $G(x;b_r)$ for convenience. Lemma \ref{lemmaVA} presents properties of the function $G$ that are used when solving problem \eqref{the-problem} in Section \ref{sec:solution}.

\begin{lem}  \label{lemmaVA} 
For  $G$ defined in \eqref{G-def} holds:
\begin{enumerate}[label=(\Roman*)]
\item \label{lemmaVA:item1} $G'(x) > 0$ for all $x\geq 0$.
\item \label{lemmaVA:item2} If condition \eqref{lemma1:newcondition-k}  holds with reversed inequality, then $G'(0) \leq k$.

\end{enumerate}
\end{lem}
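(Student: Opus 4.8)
The plan is to work directly from the explicit formula \eqref{G-def}, differentiating $G$ piecewise. For $x>b$ one has $G'(x)=1$, while for $0\le x\le b$ differentiation gives
\[
G'(x)=\frac{r_1 e^{r_1 x}-r_2 e^{r_2 x}}{r_1 e^{r_1 b}-r_2 e^{r_2 b}},
\qquad b=b_r\vee b^*>0.
\]
For part \ref{lemmaVA:item1} I would simply observe that, since $r_1>0$ and $r_2<0$ by \eqref{properties:r1r2}, each of the terms $r_1e^{r_1x}$, $-r_2e^{r_2x}$, $r_1e^{r_1b}$, $-r_2e^{r_2b}$ is strictly positive. Hence both the numerator and the denominator of the displayed quotient are positive and $G'(x)>0$ on $[0,b]$, while trivially $G'(x)=1>0$ for $x>b$ (the two pieces also agree at $x=b$, so $G\in C^1$). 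This settles \ref{lemmaVA:item1} with no real work.

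For part \ref{lemmaVA:item2} the first step is to evaluate the quotient at $x=0$, obtaining
\[
G'(0)=\frac{r_1-r_2}{r_1 e^{r_1 b}-r_2 e^{r_2 b}}.
\]
The key observation is that the right-hand side of \eqref{lemma1:newcondition-k} is exactly $G'(0)$ evaluated at $b=b^*$: from the definition \eqref{b*} one checks that $e^{r_i b^*}=(r_2^2/r_1^2)^{r_i/(r_1-r_2)}$ for $i=1,2$, so the denominator appearing in \eqref{lemma1:newcondition-k} equals $r_1 e^{r_1 b^*}-r_2 e^{r_2 b^*}$. Writing $h(b):=r_1 e^{r_1 b}-r_2 e^{r_2 b}$, it therefore suffices to prove $h(b)\ge h(b^*)$ for every $b\ge b^*$, since this gives $G'(0)\le (r_1-r_2)/h(b^*)$, which is precisely the threshold in \eqref{lemma1:newcondition-k}.

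The main (and essentially only) step is thus to locate the minimum of $h$. I would compute $h'(b)=r_1^2 e^{r_1 b}-r_2^2 e^{r_2 b}$ and solve $h'(b)=0$, which yields $e^{(r_1-r_2)b}=r_2^2/r_1^2$, i.e.\ exactly $b=b^*$ by \eqref{b*}. Moreover $h''(b)=r_1^3 e^{r_1 b}-r_2^3 e^{r_2 b}>0$ for all $b$ (again because $r_1>0>r_2$ makes both summands positive), so $h$ is strictly convex and $b^*$ is its unique global minimiser. Since $b=b_r\vee b^*\ge b^*$, we get $h(b)\ge h(b^*)$ and hence $G'(0)\le (r_1-r_2)/h(b^*)$. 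Finally, invoking the hypothesis that \eqref{lemma1:newcondition-k} holds with reversed inequality, i.e.\ $k\ge (r_1-r_2)/h(b^*)$, closes the chain $G'(0)\le (r_1-r_2)/h(b^*)\le k$. The whole argument is elementary; the one insight that makes it work is recognising $b^*$ as the minimiser of the denominator $h$, which is exactly what ties the threshold in \eqref{lemma1:newcondition-k} to the slope $G'(0)$.
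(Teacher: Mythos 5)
Your proposal is correct and follows essentially the same route as the paper: both reduce \ref{lemmaVA:item2} to recognising the right-hand side of \eqref{lemma1:newcondition-k} as $G'(0)$ evaluated at $b=b^*$, and then showing that the denominator $r_1e^{r_1b}-r_2e^{r_2b}$ attains its minimum at $b^*$ (the paper via the sign of its derivative on either side of $b^*$, you via strict convexity — an immaterial difference). No gaps; the argument is complete.
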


\begin{proof} We use \eqref{properties:r1r2} repeatedly. Item \ref{lemmaVA:item1} is directly verified. Let us prove \ref{lemmaVA:item2}. We find 
\begin{align} 
G'(0) = \frac{r_1-r_2}{r_1e^{r_1b} -r_2e^{r_2b} }. 
\end{align}
From \eqref{b*} follows $e^{b^*} = \left(\frac{r_2^2}{r_1^2}\right)^{\frac{1}{r_1-r_2}}$. In the case $b = b^{*}$ (i.e. $b_r\leq b^*$), 
\begin{align} 
G'(0) & = \frac{r_1-r_2}{r_1e^{r_1b^*} -r_2e^{r_2b^*}} \label{help-1}\\
& =  \frac{ r_1-r_2}{   r_1 \left(\frac{r_2^2}{r_1^2}\right)^{\frac{r_1}{r_1-r_2}}- r_2 \left(\frac{r_2^2}{r_1^2}\right)^{\frac{r_2}{r_1-r_2}}}.
\end{align}
Thus, $G'(0) \leq k$ if and only if \eqref{lemma1:newcondition-k} holds with reversed inequality in the case $b = b^{*}$. 
Let us view $G'(0)$ as a function of $b$ which we denote by $h$, i.e. let
\begin{align} 
h(b):= \frac{r_1-r_2}{r_1e^{r_1b} -r_2e^{r_2b}}.\label{h-function}
\end{align}
From the definition of $b^*$ follows that the derivative of the denominator in \eqref{h-function},
i.e. $r_1^2e^{r_1b} -r_2^2e^{r_2b}$, 
is (strictly) positive when $b>b^*$ and (strictly) negative when $b<b^*$. It follows that $h(b)$ is (strictly) decreasing in  $b$ for $b>b^*$ and (strictly) increasing in $b$ for $b<b^*$. Hence, $h(b)$ is maximal at $b^*$. These facts give $G'(0)\leq k$ for all $b$. We remark that these facts and the function $h$ will be used below.
\end{proof}

\begin{rem} \label{prev-lit-rem1} Problem \eqref{the-problem-nodiv} was for a general It\^{o} diffusion solved in \cite{paulsen2003optimal}. 
In the case without a dividend payout barrier, i.e. with $b_r=0$, the problem \eqref{the-problem-nodiv} is the 
well-known absorption problem first studied, for a general It\^{o} diffusion, in \cite{shreve1984optimal}. 
In particular, in \cite[Theorem 4.3]{shreve1984optimal} it was shown --- under appropriate assumptions --- that an upper barrier strategy $\bar{D}^{b^*}$ is optimal, where the barrier $b^*$ is determined by the additional boundary condition 
\begin{align} 
f''(b^*)=0 \label{ODEcondnodiv2}.
\end{align}
(If no such $b^*$ exists, then no optimal strategy exists and the optimal value function is $\lim_{b\rightarrow \infty}\mathbb{E}_x\left(\int_0^{\tau } e^{-\alpha t}d\bar{D}_t^b\right)$.) In particular, $b^*$ in Theorem \ref{paulsen-thm}  is found using condition \eqref{ODEcondnodiv2} for the value function \eqref{ref-abs-valF}.
\end{rem}

\subsection{Bankruptcy not allowed}
Here we consider the problem \eqref{the-problem} under the restriction that bankruptcy is not allowed. That is, we consider admissible strategies  $(C,D) \in \mathcal A(x,b_r)$ for which,
\begin{align}  \label{no-bankrup-assum} 
X_t \geq 0 \enskip \mbox{for all $t \geq 0$ a.s.}
\end{align}
We denote the set of such strategies by $\mathcal{A}^R(x,b_r)$. For this restricted problem holds $\tau=\infty$ a.s. and the optimal value function \eqref{the-problem} can be written as,
%%%%%%%%%
\begin{align} 
\sup_{(C,D)\in \mathcal A^R(x,b_r)}\mathbb{E}_x\left(
\limsup_{t\rightarrow \infty}
\left(
\int_0^te^{-\alpha s}dD_s - k\int_0^te^{-\alpha s}dC_s
\right)\right).
\label{the-problem-no-bankrup} 
\end{align}
Problem \eqref{the-problem-no-bankrup} has according to the authors' knowledge not been considered before. The solution is given by:

\begin{thm}\label{newthm1} The double barrier strategy  $(C^0,D^{b})$ with $b=b_r\vee b^{**}$ is optimal in \eqref{the-problem-no-bankrup}, where $b^{**}>0$ is the unique positive solution to the equation,
\begin{align} 
& r_1 e^{-r_2b^{**}} - r_2 e^{-r_1b^{**}}  = k(r_1-r_2). \label{b**}
\end{align}
\end{thm}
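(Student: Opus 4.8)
The plan is to prove optimality by the standard guess-and-verify route, taking as candidate value function the map $H$ given by the right-hand side of \eqref{ref-ref-valF} with $b=b_r\vee b^{**}$, and showing both that the double barrier strategy $(C^0,D^{b})$ attains $H$ and that no admissible strategy can exceed it. I would begin by checking that $b^{**}$ is well defined: setting $g(b):=r_1e^{-r_2b}-r_2e^{-r_1b}$, one has $g(0)=r_1-r_2$ and, by \eqref{properties:r1r2}, $g'(b)=r_1r_2(e^{-r_1b}-e^{-r_2b})>0$ for $b>0$, so $g$ increases strictly from $r_1-r_2$ to $+\infty$; since $k>1$, the equation $g(b)=k(r_1-r_2)$ in \eqref{b**} has a unique positive solution. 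A direct differentiation of \eqref{ref-ref-valF} then shows that \eqref{b**} is exactly the condition $H''(b)=0$, the analogue for the present problem of the absorption condition \eqref{ODEcondnodiv2}; this is what pins down $b^{**}$ as the optimal \emph{free} barrier.

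For the lower bound I would note that, since $b=b_r\vee b^{**}\ge b_r$, the strategy $(C^0,D^{b})$ lies in $\mathcal A^R(x,b_r)$: it is non-decreasing, LCRL and adapted with $C^0_0=D^b_0=0$, it never lets $X$ become negative, and dividends are paid only while $X_t=b\ge b_r$, so the payout condition \eqref{Admiss1} (hence \eqref{Admiss2}) holds. Because $D^b$ and $C^0$ are non-decreasing and $e^{-\alpha s}>0$, the two running integrals in \eqref{the-problem-no-bankrup} are monotone in $t$, so the inner $\limsup$ is an honest limit, and monotone convergence applied to the two integrals separately identifies the value of $(C^0,D^b)$ with the expression \eqref{ref-ref-valF}, i.e.\ with $H(x)$. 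It then remains to prove the matching upper bound $\le H(x)$ for every competing $(C,D)\in\mathcal A^R(x,b_r)$.

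The core is to verify that $H$ satisfies the appropriate variational inequality. On $[0,b]$ the function $H$ solves the ODE \eqref{ODE} with \eqref{ODEcondnobank1} by construction, so $\mu H'+\tfrac12\sigma^2H''-\alpha H=0$ there, while for $x>b$, where $H$ is affine with slope $1$, the identity $\mu-\alpha H(b)=-\tfrac12\sigma^2H''(b^{-})$ together with $H''(b)\ge0$ (equality iff $b=b^{**}$) gives $\mu H'+\tfrac12\sigma^2H''-\alpha H\le0$. The delicate part is the control inequalities $H'\le k$ and $H'\ge1$. Analysing the signs of the coefficients $1-ke^{r_2b}$ and $1-ke^{r_1b}$ in \eqref{ref-ref-valF} (the latter is always negative, by \eqref{properties:r1r2} and $k>1$) shows that $H''$ changes sign at most once on $[0,b]$, from negative to positive, so that $H'$ is unimodal with $H'(0)=k$ and $H'(b)=1$; hence $H'\le k$ on $[0,b]$ and $H'=1\le k$ beyond $b$. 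For the dividend inequality the payout barrier is essential: it only has to hold where dividends are admissible, i.e.\ on $\{x\ge b_r\}$. When $b_r\le b^{**}$ one has $b=b^{**}$, $H''\le0$ on $[0,b^{**}]$, and $H'\ge H'(b^{**})=1$ there; when $b_r>b^{**}$ one has $b=b_r$, and on $\{x\ge b_r\}$ the function is affine with $H'=1$, so the (generally false) inequality $H'\ge1$ below $b_r$ is never needed. This is precisely why forcing the barrier up to $b_r$ remains optimal.

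Finally I would run the verification, following the proof of Theorem \ref{mainTHM}. Applying It\^o's formula to $e^{-\alpha t}H(X_t)$ (with the jump decomposition of Remark \ref{D-plus-remark}), using the variational inequality and the fact that \eqref{Admiss1}--\eqref{Admiss2} confine $dD$ to $\{X\ge b_r\}$, one obtains for each $t$ the pathwise bound $\int_0^te^{-\alpha s}dD_s-k\int_0^te^{-\alpha s}dC_s\le H(x)-e^{-\alpha t}H(X_t)+M_t$, where $M$ is a local martingale arising from the $dW$ term. The main obstacle is then analytic rather than conceptual: one must pass to $\limsup_{t\to\infty}$ inside $\mathbb E_x$, which requires localising $M$ by stopping times, controlling the boundary term via the linear growth of $H$ and the bound $\mathbb E_x[e^{-\alpha t}H(X_t)]\to0$, and checking that the martingale contribution does not raise the limit. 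Combining this upper bound with the lower bound of the second paragraph yields that the supremum in \eqref{the-problem-no-bankrup} equals $H(x)$ and that $(C^0,D^{b})$ is optimal.
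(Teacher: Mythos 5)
Your proposal is correct and follows essentially the same route as the paper: the paper's own proof of this theorem just invokes ``arguments analogous to those in the proof of Theorem \ref{mainTHM}'' plus the strict monotonicity of $b\mapsto r_1e^{-r_2b}-r_2e^{-r_1b}$ for uniqueness of $b^{**}$, and your verification — the same candidate $H$ solving \eqref{ODE}, \eqref{ODEcondnodiv0}, \eqref{ODEcondnobank1}, the same inequalities $1\le H'\le k$ via the single sign change of $H''$, and the same case split $b_r\le b^{**}$ versus $b_r>b^{**}$ in which the payout condition \eqref{Admiss1} makes the dividend terms vanish below $b_r$ — is exactly that argument, with your coefficient-sign analysis replacing the paper's appeal to Lemma \ref{lem2}. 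One small repair: your auxiliary claim $\mathbb{E}_x\left[e^{-\alpha t}H(X_t)\right]\to 0$ is false for arbitrary admissible strategies (capital injection can make $X_t$ grow faster than $e^{\alpha t}$, and $H$ has linear growth), but it is also not needed; since $X_t\ge 0$ and $H'>0$ give $H(X_{t+})\ge H(0)$, the boundary term satisfies $-e^{-\alpha t}H(X_{t+})\le e^{-\alpha t}|H(0)|\to 0$, which is the only direction the upper bound requires — and note this one-sided device is genuinely necessary here, because in this theorem (unlike Case A of the paper's main proof, where $H\ge 0$ is available) $H(0)$ can be negative.
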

\begin{proof} 
The result can proved using arguments analogous to those in the proof of Theorem \ref{mainTHM}. 
The uniqueness of $b^{**}$ is verified by noting that $r_1e^{-r_2b} - r_2 e^{-r_1b}$ is strictly increasing in $b$; to see this use differentiation and \eqref{properties:r1r2}. It is easy to see that $b^{**}$ must be positive. A proof in the case $b_r=0$ is found in \cite[Sec. 4,5]{lokka2008optimal}.
\end{proof}

The value function corresponding to the strategy in Theorem \ref{newthm1} can, using the results of Section \ref{Preliminaries}, be written as,
\begin{equation}
H(x;b_r):=\begin{cases}
\frac{1}{e^{r_1b}-e^{r_2b}}\left(\frac{1-ke^{r_2b}}{r_1}e^{r_1 x}-\frac{1-ke^{r_1b}}{r_2}e^{r_2x}\right),  &\ 0 \leq x \leq b,\\
x-b+  
\frac{1}{e^{r_1b}-e^{r_2b}}\left(\frac{1-ke^{r_2b}}{r_1}e^{r_1 b}-\frac{1-ke^{r_1b}}{r_2}e^{r_2b}\right),		& x>b,
\enskip b=b_r\vee b^{**}.
 \label{H-def}
\end{cases}
\end{equation}
We will usually write $H(x)$ instead of $H(x;b_r)$. Lemma \ref{lemmaVR} presents properties of the function $H$ that are used when solving the main problem \eqref{the-problem} in Section \ref{sec:solution}. 
The proof of Lemma \ref{lemmaVR} relies on the same type of arguments as the proof of Lemma \ref{lemmaVA} and is found in the appendix.

\begin{lem}  \label{lemmaVR} 
For  $H$ defined in \eqref{H-def} holds:
\begin{enumerate}[label=(\Roman*)] 

\item \label{lemma1:new:item1.5} $H'(x)>0$ for all $x\geq 0$.

 \item \label{lemma1:new:item1} $H(0)\geq 0$ is equivalent to  
\begin{align} \label{lemma1:new:item1eq1}
 r_1e^{r_1  b} - r_2e^{r_2  b} \leq \frac{r_1-r_2}{k}.
\end{align}
Moreover, $H(0)\leq 0$ is equivalent to \eqref{lemma1:new:item1eq1} with reversed inequality.

\item \label{lemma1:new:item2.5} 
Condition \eqref{lemma1:newcondition-k} is equivalent to $b^{**}\leq b^{*}$ which is equivalent to 
\begin{align} \label{newthm3:cond}
r_1e^{r_1b^{**}}-r_2e^{r_2b^{**}} \leq  \frac{r_1-r_2}{k}.% 
\end{align}
Moreover, \eqref{lemma1:newcondition-k} with reversed inequality is equivalent to $b^{**}\geq b^{*}$, which is equivalent to \eqref{newthm3:cond} with reversed inequality.

\item \label{lemma1:new:item2} 
Suppose $b_r\leq b^{**}$. 
Then, $H(0)\geq 0$ is equivalent to \eqref{lemma1:newcondition-k}, 
and $H(0) \leq 0$ is equivalent to \eqref{lemma1:newcondition-k} with reversed inequality.

\item \label{lemma1:new:item4} Suppose \eqref{lemma1:newcondition-k} holds with reversed inequality. 
Then, for any $b_r$, $H(0)\leq 0$.

\item \label{lemma1:new:item3} Suppose \eqref{lemma1:newcondition-k} holds. Then, $H(0)\geq 0$ if and only if $b_r\leq \hat b$, where
$\hat b$ is the unique solution, on the domain $[b^{**},\infty)$, to the equation 
\begin{align} 
 r_1e^{r_1 \hat b} - r_2e^{r_2 \hat b} = \frac{r_1-r_2}{k}. \label{b-hat}
\end{align}
\end{enumerate} 
\end{lem}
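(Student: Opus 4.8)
The plan is to reduce every item to the behaviour of two explicit scalar functions,
\[
\phi(b):=r_1e^{r_1b}-r_2e^{r_2b},\qquad \psi(b):=r_1e^{-r_2b}-r_2e^{-r_1b},
\]
which are linked by the identity $\psi(b)=e^{-(r_1+r_2)b}\phi(b)$. Two structural facts, both already available, drive the argument: $\phi$ is U-shaped (strictly decreasing then strictly increasing) with a unique global minimum at $b^*$, since its derivative $r_1^2e^{r_1b}-r_2^2e^{r_2b}$ changes sign exactly at $b^*$, as established in the proof of Lemma \ref{lemmaVA}; while $\psi$ is strictly increasing, as in the proof of Theorem \ref{newthm1}, so that $b^{**}$ is the unique root of $\psi(b)=k(r_1-r_2)$. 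I also record the algebraic identity $\phi(b^*)\psi(b^*)=(r_1-r_2)^2$, which follows from $e^{(r_1-r_2)b^*}=r_2^2/r_1^2$ by direct expansion.

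For \ref{lemma1:new:item1} I would evaluate \eqref{H-def} at $x=0$, place the two terms over the common negative denominator $r_1r_2$, and observe that the numerator equals $(r_2-r_1)+k\phi(b)$; since $e^{r_1b}-e^{r_2b}>0$ and $r_1r_2<0$, the sign of $H(0)$ is opposite to that of $(r_2-r_1)+k\phi(b)$, giving $H(0)\ge 0\iff\phi(b)\le(r_1-r_2)/k$, which is \eqref{lemma1:new:item1eq1}, together with the reversed statement. For \ref{lemma1:new:item1.5} I would write $H'(x)=Ae^{r_1x}+Be^{r_2x}$ on $[0,b]$; here $B>0$ always, because $k>1$ forces $1-ke^{r_1b}<0$, while $A$ may change sign. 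If $A\ge0$ positivity is immediate; if $A<0$ I factor $H'(x)=e^{r_2x}(Ae^{(r_1-r_2)x}+B)$, note that the bracket is decreasing, and use the smooth-fit value $H'(b)=1>0$ to conclude the bracket stays positive on $[0,b]$. On $(b,\infty)$ one has $H'\equiv1$.

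Item \ref{lemma1:new:item2.5} is the crux and the main obstacle, because it compares the two \emph{different} reference barriers $b^*$ (the minimiser of $\phi$) and $b^{**}$ (a level set of $\psi$); the naive bound $\phi(b^*)\le\phi(b^{**})$ points the wrong way and must be avoided. I first note that the denominator in \eqref{lemma1:newcondition-k} is exactly $\phi(b^*)$, so \eqref{lemma1:newcondition-k} reads $k\phi(b^*)\le r_1-r_2$. Using monotonicity of $\psi$ and the identity above, $b^{**}\le b^*\iff k(r_1-r_2)\le\psi(b^*)=(r_1-r_2)^2/\phi(b^*)\iff k\phi(b^*)\le r_1-r_2$, which settles \eqref{lemma1:newcondition-k}$\iff b^{**}\le b^*$. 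For \eqref{newthm3:cond} I substitute $\phi(b^{**})=k(r_1-r_2)e^{(r_1+r_2)b^{**}}$, which comes from $\psi(b^{**})=k(r_1-r_2)$ and $\phi=e^{(r_1+r_2)b}\psi$, reducing \eqref{newthm3:cond} to $k^2e^{(r_1+r_2)b^{**}}\le1$ and then to $\eta(b^{**})\le r_1-r_2$, where $\eta(b):=\psi(b)e^{(r_1+r_2)b/2}=r_1e^{(r_1-r_2)b/2}-r_2e^{-(r_1-r_2)b/2}$. Solving $\eta(b)=r_1-r_2$ in $u=e^{(r_1-r_2)b/2}$ gives $r_1u^2-(r_1-r_2)u-r_2=0$, whose discriminant $(r_1+r_2)^2$ is a perfect square; the roots $u=1$ and $u=-r_2/r_1$ correspond precisely to $b=0$ and $b=b^*$, and convexity in $u$ yields $\eta(b)\le r_1-r_2\iff b\le b^*$. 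Hence \eqref{newthm3:cond}$\iff b^{**}\le b^*$ as well.

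The remaining items are then corollaries of \ref{lemma1:new:item1}, \ref{lemma1:new:item2.5} and the shape of $\phi$. For \ref{lemma1:new:item2}, $b_r\le b^{**}$ forces $b=b^{**}$ in \eqref{H-def}, so \ref{lemma1:new:item1} turns $H(0)\ge0$ into \eqref{newthm3:cond}, equivalent to \eqref{lemma1:newcondition-k}. For \ref{lemma1:new:item4}, the reversed \eqref{lemma1:newcondition-k} gives $b^{**}\ge b^*$, hence $b=b_r\vee b^{**}\ge b^*$; since $\phi$ is increasing on $[b^*,\infty)$ and $\phi(b^{**})\ge(r_1-r_2)/k$ there, one obtains $\phi(b)\ge(r_1-r_2)/k$, i.e. $H(0)\le0$. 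For \ref{lemma1:new:item3}, under \eqref{lemma1:newcondition-k} we have $b^{**}\le b^*$ and $(r_1-r_2)/k\ge\phi(b^*)$, so on $[b^{**},\infty)$ the level $(r_1-r_2)/k$ is attained exactly once, on the increasing branch $[b^*,\infty)$, defining $\hat b$ via \eqref{b-hat}, and $\phi\le(r_1-r_2)/k$ precisely on $[b^{**},\hat b]$. Thus $H(0)\ge0\iff\phi(b_r\vee b^{**})\le(r_1-r_2)/k\iff b_r\vee b^{**}\le\hat b\iff b_r\le\hat b$, the last step using $b^{**}\le b^*\le\hat b$.
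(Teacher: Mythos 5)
Your proof is correct. For items \ref{lemma1:new:item1.5}, \ref{lemma1:new:item1}, \ref{lemma1:new:item2}, \ref{lemma1:new:item4} and \ref{lemma1:new:item3} you follow essentially the paper's route: evaluate $H$ at $0$, translate the sign of $H(0)$ into the position of $\phi(b)=r_1e^{r_1b}-r_2e^{r_2b}$ relative to $(r_1-r_2)/k$, and exploit that $\phi$ decreases on $(0,b^*)$ and increases on $(b^*,\infty)$; your case analysis for \ref{lemma1:new:item1.5} just fills in what the paper calls ``directly verified''. The genuine difference is in item \ref{lemma1:new:item2.5}, which is indeed the crux. The paper proves \eqref{newthm3:cond}$\,\Leftrightarrow\, b^{**}\le b^*$ by direct algebraic manipulation of the defining equation \eqref{b**}, and proves \eqref{lemma1:newcondition-k}$\,\Leftrightarrow\, b^{**}\le b^*$ by combining the claim that equality in \eqref{lemma1:newcondition-k} occurs precisely when $b^{**}=b^*$ (left largely to the reader: ``with some effort can be verified by solving for $k$'') with a monotonicity-in-$k$ argument for $b^{**}$. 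You replace this by the product identity $\phi(b^*)\psi(b^*)=(r_1-r_2)^2$, where $\psi(b)=r_1e^{-r_2b}-r_2e^{-r_1b}$ is the strictly increasing function whose level set defines $b^{**}$, together with the observation that the denominator on the right of \eqref{lemma1:newcondition-k} is exactly $\phi(b^*)$; this yields \eqref{lemma1:newcondition-k}$\,\Leftrightarrow\,\psi(b^{**})\le\psi(b^*)\,\Leftrightarrow\, b^{**}\le b^*$ as a transparent chain of equivalences, making fully explicit precisely the step the paper only sketches and dispensing with the implicit monotonicity argument. Conversely, your proof of \eqref{newthm3:cond}$\,\Leftrightarrow\, b^{**}\le b^*$, via the substitution $\phi(b^{**})=k(r_1-r_2)e^{(r_1+r_2)b^{**}}$, the reduction to $k^2e^{(r_1+r_2)b^{**}}\le 1$, and the quadratic $r_1u^2-(r_1-r_2)u-r_2=0$ in $u=e^{(r_1-r_2)b/2}$ with roots $u=1$ (i.e.\ $b=0$) and $u=-r_2/r_1$ (i.e.\ $b=b^*$), is correct (I checked the identity, the discriminant $(r_1+r_2)^2$, and the root correspondence) but longer than the paper's direct algebra. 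Net effect: your argument is more self-contained on the half of item \ref{lemma1:new:item2.5} that the paper glosses over, and somewhat more elaborate on the half the paper does directly.
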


\begin{rem} \label{prev-lit-rem2} In the case $b_r=0$ the problem \eqref{the-problem-no-bankrup} is the 
well-known reflection problem first studied, for a general It\^{o} diffusion, in \cite{shreve1984optimal}; see also 
\cite[Sec. 5]{shreve1984optimal} where the problem is studied for a Wiener process with drift. %
In particular, in  \cite[Theorem 4.5]{shreve1984optimal} it was for the case $b_r=0$ shown --- under appropriate assumptions 
 --- that the double barrier strategy $(C^0,D^{b^{**}})$ is optimal, where the barrier $b^{**}$ is given by the additional boundary condition
\begin{align}
f''(b^{**})=0 \label{ODEcondnobank2}.
\end{align}
(If no such $b^{**}$ exists, then no optimal strategy exists and the optimal value function is 
$\lim_{b\rightarrow \infty}\mathbb{E}_x\left(
\int_0^{\infty} e^{-\alpha t}dD_t^b
-k\int_0^{\infty} e^{-\alpha t}dC^0_t
\right)$.) In particular, $b^{**}$ in Theorem \ref{newthm1} is found using condition \eqref{ODEcondnobank2} for the value function \eqref{ref-ref-valF}.
\end{rem}

\begin{rem} \label{prev-lit-rem3} The equivalences in \ref{lemma1:new:item2.5} in Lemma \ref{lemmaVR} were in the context of studying problem \eqref{the-problem} without a dividend payout barrier, i.e. with $b_r=0$, derived in \cite{lokka2008optimal}.
\end{rem}

\section{Solution to the main problem} \label{sec:solution}
Since our model is Markovian it is reasonable to conjecture that the optimal strategy for problem \eqref{the-problem} involves either that the owner always saves the insurance company from bankruptcy by injecting capital when the surplus process hits zero, or that the owner never does so. Indeed this is what we find in Theorem \ref{mainTHM}. The results in this section are illustrated in graphs in Section \ref{graphs-sec} and interpreted in Section \ref{sec-conclusions}.

\begin{thm}[Main result] \label{mainTHM} 
Consider 
$b^{*}$ defined in \eqref{b*}, 
$b^{**}$  defined in \eqref{b**}  and 
$\hat b$ defined in \eqref{b-hat}. 
For problem \eqref{the-problem} holds:
\begin{enumerate}[label=(\Roman*)]
\item  Suppose   \eqref{lemma1:newcondition-k} holds. 
\begin{enumerate}[label=(\Roman{enumi}.\alph*)]
\item \label{mainTHMnew:1a} If the dividend payout barrier satisfies $b_r\leq \hat b$ then the double barrier strategy $(C^0,D^b)$ with $b = b_r\vee b^{**}$ is optimal.
The corresponding bankruptcy time, cf. \eqref{tau-D}, satisfies $\tau =\infty$ a.s.
\item \label{mainTHMnew:1b} If $b_r\geq \hat b$ then the upper barrier strategy $\bar{D}^b$ with $b = b_r\vee b^{*}$ is optimal. 
The moments of the corresponding bankruptcy time are finite, i.e. $\mathbb{E}_x\left(\tau^n\right)<\infty$ for all $x\geq0$ and $n$. 
\end{enumerate} 
\item \label{mainTHMnew:2} Suppose   \eqref{lemma1:newcondition-k} holds with reversed inequality. Then the upper barrier strategy $\bar{D}^b$ with $b = b_r\vee b^{*}$ is optimal for any given $b_r$. 
The corresponding bankruptcy time satisfies the same condition as in \ref{mainTHMnew:1b}.
\end{enumerate}
%%%%
%
\end{thm}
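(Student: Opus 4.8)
The plan is to solve problem \eqref{the-problem} by the guess-and-verify method, taking as candidate value function $\Phi$ either $H$ from \eqref{H-def} (in case \ref{mainTHMnew:1a}) or $G$ from \eqref{G-def} (in cases \ref{mainTHMnew:1b} and \ref{mainTHMnew:2}), and verifying in each case that $\Phi$ dominates the reward of every admissible strategy while being attained by the claimed barrier strategy. The first step is to reduce the three-way case split to the single quantity $H(0)$, the value at ruin level of the double barrier strategy. By Lemma \ref{lemmaVR}\ref{lemma1:new:item3}, under \eqref{lemma1:newcondition-k} we have $H(0)\ge 0$ exactly when $b_r\le\hat b$, so case \ref{mainTHMnew:1a} is the regime $H(0)\ge 0$ and case \ref{mainTHMnew:1b} the regime $H(0)\le 0$; by Lemma \ref{lemmaVR}\ref{lemma1:new:item4}, under the reversed inequality $H(0)\le 0$ for every $b_r$, which is case \ref{mainTHMnew:2}. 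The economically meaningful dichotomy is thus whether rescuing the company by injecting capital at $0$ yields non-negative value ($H(0)\ge 0$, inject) or not ($H(0)\le 0$, default).

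Writing $\mathcal{L}\phi:=\tfrac12\sigma^2\phi''+\mu\phi'-\alpha\phi$, the verification rests on showing that the candidate $\Phi$, which is $C^1$ on $[0,\infty)$ and $C^2$ off the barriers, satisfies the variational inequality
\begin{align}
\mathcal{L}\Phi(x)\le 0 \ \text{ and }\ \Phi'(x)\le k\ \ (x\ge 0),\qquad \Phi'(x)\ge 1\ \ (x\ge b_r),
\end{align}
with $\mathcal{L}\Phi=0$ on the relevant continuation region and the boundary behaviour $\Phi(0)=G(0)=0$ when $\Phi=G$, respectively $\Phi(0)=H(0)\ge 0$ when $\Phi=H$. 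The crucial feature introduced by the payout barrier is that the dividend bound $\Phi'\ge 1$ is only imposed on $[b_r,\infty)$, since \eqref{Admiss1} forbids dividends below $b_r$; this is exactly what lets the reflection level be pinned at $b=b_r$ when $b_r$ exceeds the free boundary $b^{*}$ resp. $b^{**}$, even though there $\Phi'<1$. Each inequality is supplied by the preliminaries: $\mathcal{L}\Phi=0$ holds by construction since $G,H$ solve \eqref{ODE}; $\Phi'>0$ is Lemma \ref{lemmaVA}\ref{lemmaVA:item1} / Lemma \ref{lemmaVR}\ref{lemma1:new:item1.5}; the bound $\Phi'\ge 1$ on $[b_r,\infty)$ holds because $\Phi'=1$ for $x\ge b$ and, on the interval $[b_r,b]$ (nonempty only when $b^{*}$ resp. $b^{**}$ exceeds $b_r$), the concavity of $G$ on $[0,b^{*}]$ resp. $H$ on $[0,b^{**}]$ gives $\Phi'\ge\Phi'(b)=1$; and the injection bound $\Phi'\le k$ is Lemma \ref{lemmaVA}\ref{lemmaVA:item2} in case \ref{mainTHMnew:2}, follows in case \ref{mainTHMnew:1b} from $b_r\ge\hat b$ forcing $G'(0)=h(b_r)\le h(\hat b)=k$ via the monotonicity of the function $h$ in \eqref{h-function}, and for $\Phi=H$ comes from the $C^1$-fit $H'(0)=k$ together with the sign of the second derivative (as analysed in the proof of Lemma \ref{lemmaVA}).

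With the variational inequality established, the verification is an application of the It\^{o}--Dynkin formula to $e^{-\alpha t}\Phi(X_t)$ along an arbitrary admissible $(C,D)$, splitting $dD$ and $dC$ into continuous and jump parts as in Remark \ref{D-plus-remark}. The drift term contributes nonpositively via $\mathcal{L}\Phi\le 0$; the dividend increments contribute at most $\int e^{-\alpha s}dD_s$ because $\Phi'\ge 1$ holds precisely on the set $\{X\ge b_r\}$ to which \eqref{Admiss1}, resp. \eqref{Admiss2}, confines $dD$ (jumps handled by integrating $\Phi'\ge 1$ across $[X_{t+},X_t]\subseteq[b_r,\infty)$ by Remark \ref{cap-req-rem}); and the capital increments contribute at least $-k\int e^{-\alpha s}dC_s$ because $\Phi'\le k$. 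Taking expectations (the stochastic integral is a martingale) and letting $t\to\infty$ with a transversality estimate killing the terminal term yields $\mathbb{E}_x(\cdot)\le\Phi(x)$ for every admissible strategy; the boundary contribution at $\tau$ is controlled by $\Phi(0)=G(0)=0$ in the upper-barrier cases and by $H(0)\ge 0$ (and $H\ge H(0)\ge 0$) in the double-barrier case, which is exactly where the sign of $H(0)$ enters. Equality for the claimed strategy follows because under $\bar D^{b}$ (resp. $(C^0,D^{b})$) dividends are paid only where $\Phi'=1$ and capital injected only where $\Phi'=k$, so every inequality above becomes an equality.

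The main obstacle I anticipate is the verification at and below the payout barrier: one must argue that the relaxed requirement ($\Phi'\ge 1$ only for $x\ge b_r$) together with \eqref{Admiss1} suffices to close the estimate, and that pinning $b=b_r$ rather than the free boundary does not open a region where a deviation to dividends beats $\Phi$ — this is precisely the content of Remark \ref{cap-req-rem}, ensuring dividend jumps never move $X$ below $b_r$. Establishing the gradient bounds in the pinned case ($b=b_r$, where $\Phi$ is no longer concave up to $b$) via the second-derivative sign analysis is the delicate computational point. The remaining assertions on the ruin time are standard: under a double barrier strategy $X\ge 0$ identically, so $\tau=\infty$ a.s., whereas under an upper barrier strategy $X$ is a drifted Brownian motion reflected at $b$ and absorbed at $0$ on the bounded interval $[0,b]$, so $\tau$ has finite moments of every order (indeed finite exponential moments) by the usual exit-time estimates for a diffusion on a bounded interval.
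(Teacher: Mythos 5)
Your proposal follows essentially the same verification route as the paper: candidate $G$ or $H$, the relaxed dividend bound $\Phi'\ge 1$ imposed only on $[b_r,\infty)$ because of \eqref{Admiss1}/\eqref{Admiss2}, concavity below the free boundary in the unpinned case, the $H'(0)=k$ plus second-derivative-sign analysis in the pinned case $b=b_r$, non-negativity of the terminal term via $H(0)\ge 0$ resp.\ $G(0)=0$, and a direct exit-time argument (in place of the paper's citation) for the moments of $\tau$. However, two steps of your variational inequality are asserted but not actually supplied. First, $\mathcal{L}\Phi\le 0$ on $(b,\infty)$ is \emph{not} ``by construction since $G,H$ solve \eqref{ODE}'': the ODE holds only on $[0,b)$, while on $(b,\infty)$ the candidate is affine, so $\mathcal{L}\Phi(x)=\mu-\alpha\bigl(x-b+\Phi(b)\bigr)\le\mu-\alpha\Phi(b)$, and the sign $\mu-\alpha\Phi(b)\le 0$ needs an argument. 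In the paper this is Lemma \ref{lem1}: from the ODE and $\Phi'(b)=1$ one gets $\tfrac12\sigma^2\lim_{x\nearrow b}\Phi''(x)=\alpha\Phi(b)-\mu$, and this limit is $0$ when $b$ is the free boundary but must be shown \emph{strictly positive} in the pinned case $b=b_r$, cf.\ \eqref{lemma1:eq1} — a nontrivial computation your write-up never addresses.

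Second, your justification of the injection bound in case \ref{mainTHMnew:1b}, namely ``$G'(0)=h(b_r)\le h(\hat b)=k$ via the monotonicity of the function $h$'', is invalid as stated, because $h$ in \eqref{h-function} is not monotone: it is increasing on $[0,b^*]$ and decreasing on $[b^*,\infty)$ (proof of Lemma \ref{lemmaVA}). Both the inference $h(b_r)\le h(\hat b)$ and even the identity $G'(0)=h(b_r)$ (which presumes $b_r\vee b^*=b_r$) require $\hat b\ge b^*$, and this inequality is precisely what the paper's Case C proves, using $k=h(\hat b)$, $k\le h(b^{**})$ (Lemma \ref{lemmaVR}\ref{lemma1:new:item2.5}), $\hat b\ge b^{**}$, $b^{**}\le b^*$ and the unimodal shape of $h$. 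Without it, nothing in your argument rules out $\hat b<b^*$, in which case a barrier $b_r\in[\hat b,b^*)$ would give $G'(0)=h(b^*)\ge k$ and the bound would fail. Both gaps are fillable with the paper's lemmas, but as written they are missing ideas, not mere terseness.
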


 \begin{rem} A recursive formula for the moments of the bankruptcy times in \ref{mainTHMnew:1b} and \ref{mainTHMnew:2} in Theorem \ref{mainTHM} can be found in \cite{wang2008moments}. 
\end{rem}

From Theorem \ref{mainTHM}, \eqref{G-def} and \eqref{H-def} follows directly:
\begin{cor} \label{corr1} The optimal value function for problem \eqref{the-problem} has the representations,
%%%%%%%%%%%%
%%%%%%%%%%%%
\begin{equation}
V(x;b_r) =\begin{cases}
H(x;b_r), 					&    \mbox{if \eqref{lemma1:newcondition-k} holds and $b_r \leq \hat b$,}\\
G(x;b_r),						&   \mbox{if \eqref{lemma1:newcondition-k} holds with reverse equality or $b_r \geq \hat b$,} \label{mainTHM:eq-valF}
	\end{cases}
	\end{equation}
\begin{equation}
  = H(x;b_r) \vee G(x;b_r).  \quad\quad\quad\quad\quad\quad\quad\quad\quad\quad\quad\enskip
\end{equation} 
\end{cor}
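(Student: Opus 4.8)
The plan is to treat \eqref{the-problem} as a singular stochastic control problem and prove the theorem by a verification argument. For each parameter regime I would propose a candidate value function $\Phi$ --- namely $\Phi=H$ (the double-barrier value \eqref{H-def}) in case \ref{mainTHMnew:1a} and $\Phi=G$ (the upper-barrier value \eqref{G-def}) in cases \ref{mainTHMnew:1b} and \ref{mainTHMnew:2}, so that in all cases $\Phi=G\vee H$ as in Corollary \ref{corr1} --- and show that it is a $C^1$, piecewise-$C^2$ supersolution of the Hamilton--Jacobi--Bellman variational inequality
\begin{align}
\max\Big\{\mu\Phi'(x)+\tfrac12\sigma^2\Phi''(x)-\alpha\Phi(x),\ \big(1-\Phi'(x)\big)I_{\{x\ge b_r\}},\ \Phi'(x)-k\Big\}\le 0,\quad x\ge 0,
\end{align}
together with the boundary behaviour appropriate to the regime ($\Phi(0)=0$ and absorption at $0$ for $G$; $\Phi'(0)=k$ and reflection at $0$ for $H$). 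The three entries encode, respectively, that waiting is admissible, that paying dividends is not strictly profitable where it is allowed (i.e. on $\{x\ge b_r\}$, which by Remark \ref{cap-req-rem} and \eqref{Admiss2} is where any admissible $dD$ is supported), and that injecting capital is not strictly profitable.

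First I would establish the upper bound $\Phi(x)\ge V(x;b_r)$. Fix an admissible $(C,D)$ and apply the generalized It\^o formula to $e^{-\alpha t}\Phi(X_t)$, decomposing $dX$ into its continuous martingale part, the drift, the singular continuous parts of $C$ and $D$, and the jumps. On the continuous part the first entry makes the drift contribution nonpositive; the singular continuous increments give $(1-\Phi')\,dD^c\le 0$ (using $\Phi'\ge 1$ on the support of $dD$) and $(\Phi'-k)\,dC^c\le 0$; and each jump contributes a term which, written as $\int_{X_{t+}}^{X_t}(1-\Phi'(y))\,dy$ or $\int_{X_t}^{X_{t+}}(\Phi'(y)-k)\,dy$, is again nonpositive. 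This yields that $e^{-\alpha(t\wedge\tau)}\Phi(X_{t\wedge\tau})+\int_0^{t\wedge\tau}e^{-\alpha s}dD_s-k\int_0^{t\wedge\tau}e^{-\alpha s}dC_s$ is a supermartingale. Taking expectations, localizing the stochastic integral, and letting $t\to\infty$ gives the bound; the boundary value $\Phi(0)=0$ (absorption) or $\Phi(0)\ge 0$ (reflection) handles the contribution at bankruptcy, and a growth/transversality estimate controls the $\limsup$ term.

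The heart of the proof is checking that the chosen $\Phi$ really is a supersolution, and this is where Lemmas \ref{lemmaVA} and \ref{lemmaVR} enter. The ODE entry holds with equality on $[0,b]$ by construction of $G,H$; on $[b,\infty)$, where $\Phi$ is affine of slope $1$, it reduces to $\mu\le\alpha\Phi(b)$, a routine sign check. Concavity of $\Phi$ on $[0,b]$ together with the smooth-fit condition $\Phi'(b)=1$ gives $\Phi'\ge 1$ on $[b_r,b]$, so the dividend entry holds. The delicate entry is $\Phi'\le k$: by concavity it suffices to check $\Phi'(0)\le k$. For $\Phi=G$ in regime \ref{mainTHMnew:2} this is exactly Lemma \ref{lemmaVA}\ref{lemmaVA:item2}; in regime \ref{mainTHMnew:1b} I would use $b_r\ge\hat b\ge b^*$ (so $b=b_r$) and the fact that the function $h$ of \eqref{h-function} satisfies $h(\hat b)=k$ by the definition \eqref{b-hat} of $\hat b$, whence $G'(0)=h(b_r)\le h(\hat b)=k$ by the monotonicity of $h$ past $b^*$ established in Lemma \ref{lemmaVA}. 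For $\Phi=H$ the condition $\Phi'(0)=k$ holds by construction, and the sign of $H(0)$ --- nonnegative exactly when $b_r\le\hat b$ by Lemma \ref{lemmaVR}\ref{lemma1:new:item3}, and nonpositive for all $b_r$ under the reversed inequality by \ref{lemma1:new:item4} --- is precisely what decides, in each regime, whether the reflecting candidate $H$ or the absorbing candidate $G$ is the larger one and hence the correct supersolution.

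Finally, attainment is immediate: by the constructions of Section \ref{Preliminaries} the double-barrier strategy $(C^0,D^b)$ and the upper-barrier strategy $\bar D^b$ realise exactly the values $H$ and $G$, so the bound is achieved and $V=\Phi$. The statements on $\tau$ follow because a surplus reflected at $0$ never becomes negative, giving $\tau=\infty$ in \ref{mainTHMnew:1a}, while for the absorbing strategies $X$ is a Wiener process with drift killed at $0$, whose hitting time of $0$ has finite moments of all orders (e.g. via \cite{wang2008moments}). I expect the main obstacle to be the verification inequality of the second paragraph: making the generalized It\^o/supermartingale argument rigorous in the presence of simultaneous singular and jump parts of $C$ and $D$, correctly handling the jump contributions through $1-\Phi'$ and $\Phi'-k$, and justifying the passage to the limit in $\limsup_{t\to\infty}$ together with the behaviour at the bankruptcy time $\tau$; establishing the global supersolution property --- in particular $\Phi'\le k$ everywhere and the correct matching of the free boundary between the $G$- and $H$-regimes --- is the second substantial point, and it is exactly what the two lemmas are designed to supply.
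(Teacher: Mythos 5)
Your proposal reconstructs, in substance, the paper's proof of Theorem \ref{mainTHM}; the paper then obtains Corollary \ref{corr1} from that theorem in one line, since by Section \ref{Preliminaries} the functions $H$ and $G$ are exactly the values of the two candidate strategies, both of which are admissible, so whichever is optimal dominates the other and $V=H\vee G$. Your architecture --- It\^{o}--Tanaka applied to $e^{-\alpha t}\Phi(X_{t+})$, the sign conditions $\Phi'\ge 1$ on $\{x\ge b_r\}$ (correctly localized via \eqref{Admiss2}), $\Phi'\le k$, nonpositive drift term above $b$, $\Phi\ge 0$, attainment by the explicit reflection strategies, and the sign of $H(0)$ via Lemma \ref{lemmaVR} deciding the regime --- coincides with the paper's verification argument, so the route is the same even though you inline the theorem rather than cite it.

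There are, however, two genuine gaps. First, your reduction ``by concavity it suffices to check $\Phi'(0)\le k$'' is unjustified precisely in the regimes where the dividend payout barrier binds: when $b=b_r>b^{*}$ (for $G$) or $b=b_r>b^{**}$ (for $H$), the candidate is \emph{not} concave on $[0,b]$; by Lemma \ref{lem1} one has $\lim_{x\nearrow b_r}\Phi''(x)>0$, so $\Phi$ is concave--convex there. The conclusion $\Phi'\le k$ survives, but only via the single-sign-change property of $\Phi''$ for solutions of \eqref{ODE} (the paper's Lemma \ref{lem2}): $\Phi'$ first decreases then increases on $[0,b_r]$, so its supremum is $\max\{\Phi'(0),\Phi'(b_r)\}=\max\{\Phi'(0),1\}$, and only then does the endpoint check suffice. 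Since the binding regimes are exactly the novel content of the statement, this is not a cosmetic slip. Second, in regime \ref{mainTHMnew:1b} you assert $b_r\ge\hat b\ge b^{*}$ without proof; the inequality $\hat b\ge b^{*}$ is needed both to conclude $b=b_r\vee b^{*}=b_r$ and to apply the monotonicity of $h$ on $[\hat b,b_r]$, and it is precisely what the paper's Case C establishes, via $k=h(\hat b)\le h(b^{**})\le h(b^{*})$ (using Lemma \ref{lemmaVR}\ref{lemma1:new:item2.5}, the maximality of $h$ at $b^{*}$, and $\hat b\in[b^{**},\infty)$) together with the unimodality of $h$. Both gaps are repairable with the paper's Lemmas \ref{lem1}, \ref{lem2}, \ref{lemmaVA} and \ref{lemmaVR}, but as written your verification would fail exactly where the regulation matters.
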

Note that Corollary \ref{corr1}, \eqref{G-def} and \eqref{H-def} give an explicit expression for the optimal value function of problem \eqref{the-problem}. 

The following properties of the solution of problem \eqref{the-problem} are proved in the appendix:

\begin{cor}  \label{cor1.5} Suppose \eqref{lemma1:newcondition-k} holds with strict inequality. 
If $b_r<\hat b$, then $V(0;b_r) = H(0;b_r) > G(0;b_r) = 0$.  
If $b_r>\hat b$, then $H(0;b_r) < V(0;b_r) = G(0;b_r) = 0$.   % 
\end{cor}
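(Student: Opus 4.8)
The plan is to reduce the whole statement to determining the sign of $H(0;b_r)$, and then to read off $V(0;b_r)$ from Corollary \ref{corr1}. First, evaluating the upper branch of \eqref{G-def} at $x=0$ gives $G(0;b_r)=(1-1)/(r_1e^{r_1b}-r_2e^{r_2b})=0$ for either barrier value, so the claims $G(0;b_r)=0$ need no argument. Next, since \eqref{lemma1:newcondition-k} is assumed, Corollary \ref{corr1} yields $V(0;b_r)=H(0;b_r)$ when $b_r\leq\hat b$ and $V(0;b_r)=G(0;b_r)$ when $b_r\geq\hat b$; in particular $V(0;b_r)=H(0;b_r)$ in the case $b_r<\hat b$, and $V(0;b_r)=G(0;b_r)=0$ in the case $b_r>\hat b$. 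Thus it remains only to establish the two strict inequalities $H(0;b_r)>0$ for $b_r<\hat b$ and $H(0;b_r)<0$ for $b_r>\hat b$.

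To obtain these I would study the single function $\psi(b):=r_1e^{r_1b}-r_2e^{r_2b}$, where $b=b_r\vee b^{**}$ is the barrier entering \eqref{H-def}. By item \ref{lemma1:new:item1} of Lemma \ref{lemmaVR} (condition \eqref{lemma1:new:item1eq1}), the sign of $H(0;b_r)$ is precisely the sign of $(r_1-r_2)/k-\psi(b)$: one has $H(0)>0$, $=0$ or $<0$ according as $\psi(b)<$, $=$ or $>(r_1-r_2)/k$. The shape of $\psi$ is controlled by $\psi'(b)=r_1^2e^{r_1b}-r_2^2e^{r_2b}$, whose unique zero is $b^{*}$ by the definition \eqref{b*}; using \eqref{properties:r1r2} (exactly the computation already made for the denominator of $h$ in the proof of Lemma \ref{lemmaVA}) one sees that $\psi$ is strictly decreasing on $(0,b^{*})$, strictly increasing on $(b^{*},\infty)$, and tends to $+\infty$.

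Combining these, because \eqref{lemma1:newcondition-k} holds with strict inequality, item \ref{lemma1:new:item2.5} (in its strict form, equation \eqref{newthm3:cond}) gives $b^{**}<b^{*}$ and $\psi(b^{**})<(r_1-r_2)/k$. Since $\psi$ decreases on $[b^{**},b^{*}]$ it stays below $(r_1-r_2)/k$ there, and since it then increases strictly to $+\infty$ it crosses the level $(r_1-r_2)/k$ exactly once, at the point characterised by \eqref{b-hat}, namely $\hat b$ (matching the uniqueness in item \ref{lemma1:new:item3}); hence $\hat b>b^{*}>b^{**}$ and $(r_1-r_2)/k-\psi(b)>0$ for every $b\in[b^{**},\hat b)$ while it is $<0$ for every $b>\hat b$. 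Translating through $b=b_r\vee b^{**}$: if $b_r<\hat b$ then $b\in[b^{**},\hat b)$, so $H(0;b_r)>0$; if $b_r>\hat b$ then $b=b_r>\hat b$, so $H(0;b_r)<0$. Together with the first paragraph this proves the corollary.

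I expect the one delicate point to be the strictness in the case $b_r<\hat b$: item \ref{lemma1:new:item3} by itself only delivers $H(0;b_r)\geq0$ there, so the argument must genuinely use the strict form of \eqref{lemma1:newcondition-k} to push $\psi(b^{**})$ strictly below $(r_1-r_2)/k$, and must use the monotonicity picture of $\psi$ to guarantee that $\hat b$ is the only zero of $(r_1-r_2)/k-\psi$ on $[b^{**},\infty)$, thereby excluding $H(0;b_r)=0$ on the whole interval $[b^{**},\hat b)$. Establishing that monotonicity cleanly is the crux of the computation.
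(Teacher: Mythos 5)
Your proof is correct, and it takes a genuinely different route to the core fact (the sign of $H(0;b_r)$ on either side of $\hat b$) than the paper does. The paper's proof anchors at $b_r=\hat b$, where $H(0;\hat b)=G(0;\hat b)=0$ follows from Corollary \ref{corr1}, and then invokes that $b_r\mapsto H(0;b_r)$ is \emph{strictly decreasing} for $b_r>b^{**}$ --- a fact it obtains by differentiating $H(0;b_r)$ with respect to the barrier --- together with $\hat b>b^{**}$ under strict inequality. You never establish (or need) monotonicity of $H(0;\cdot)$ at all: you convert the sign of $H(0;b_r)$ into the position of $\psi(b)=r_1e^{r_1b}-r_2e^{r_2b}$ relative to $(r_1-r_2)/k$ via Lemma \ref{lemmaVR}\ref{lemma1:new:item1}, and then exploit the unimodal shape of $\psi$ (strictly decreasing below $b^*$, strictly increasing above, diverging to $+\infty$), which was already computed in the proof of Lemma \ref{lemmaVA}. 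What your route buys is economy: the only derivative you need is that of the two-term exponential $\psi$, everything else is recycled from stated lemmas, and you get $\hat b>b^*>b^{**}$ as a byproduct. What the paper's route buys is the stronger intermediate fact that $H(0;\cdot)$ is strictly monotone in $b_r$, which is of independent interest (it echoes the remark following Corollary \ref{cor2}). One small point in your favor worth making explicit: the ``strict form'' of Lemma \ref{lemmaVR}\ref{lemma1:new:item2.5} that you invoke is not an additional assumption --- it follows by negation from the two non-strict equivalences stated in the lemma ($k<$ the bound excludes the reversed inequality, hence excludes $b^{**}\geq b^*$ and $\psi(b^{**})\geq (r_1-r_2)/k$); the paper handles the analogous issue for its claim $\hat b>b^{**}$ only by appeal to ``a direct modification of Lemma \ref{lemmaVR} based on strict inequalities,'' so your treatment of the delicate strictness point is, if anything, more self-contained.
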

\begin{rem} The interpretation of Corollary \ref{cor1.5} is that in the case capital injection costs are low 
(in the sense that \eqref{lemma1:newcondition-k} holds holds with \emph{strict} inequality) holds that: 
if the dividend payout barrier satisfies $b_r<\hat b$ then it is \emph{not} optimal to allow bankruptcy and 
if the dividend payout barrier satisfies $b_r>\hat b$ then it is \emph{not} optimal to save the insurance company from bankruptcy. 		
\end{rem}

\begin{cor} \label{cor2} For any fixed $x>0$ holds:
\begin{enumerate}[label=(\Roman*)]

\item \label{cor2:1} $V(x;b_r)$ is decreasing in $b_r$. In particular,

\begin{enumerate}[label=(\Roman{enumi}.\alph*)]
\item \label{cor2:1a}  If \eqref{lemma1:newcondition-k} holds, 
then $V(x;b_r)$ is independent of $b_r$ for $b_r\leq b^{**}$ and strictly decreasing in $b_r$ for $b_r>b^{**}$.

\item \label{cor2:1b} If \eqref{lemma1:newcondition-k} holds with reversed inequality, 
then $V(x;b_r)$ is independent of $b_r$ for $b_r\leq b^{*}$ and strictly decreasing in $b_r$ for $b_r>b^{*}$.
\end{enumerate}

\item \label{cor2:2} $\lim_{b_r\rightarrow \infty} V(x;b_r) = 0$. 
\end{enumerate}
\end{cor}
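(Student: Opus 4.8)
The plan is to combine an abstract monotonicity argument for the weak statement with the explicit piecewise representation of $V$ from Corollary \ref{corr1} for the refinements. First I would establish the weak monotonicity in \ref{cor2:1}: for $b_r'\ge b_r$ the dividend payout condition \eqref{Admiss1} becomes more restrictive, since $I_{\{X_t<b_r\}}\le I_{\{X_t<b_r'\}}$ and $D$ is non-decreasing; because the controlled surplus \eqref{X-SDE} and the objective in \eqref{the-problem} depend only on $(C,D)$ and not on $b_r$, this shows $\mathcal A(x,b_r')\subseteq \mathcal A(x,b_r)$ and hence $V(x;b_r')\le V(x;b_r)$, so $V(x;\cdot)$ is decreasing.

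For the refinements \ref{cor2:1a} and \ref{cor2:1b} I would use that, under \eqref{lemma1:newcondition-k}, Lemma \ref{lemmaVR} gives $b^{**}\le b^{*}\le \hat b$, and invoke Theorem \ref{mainTHM}: when \eqref{lemma1:newcondition-k} holds, $V(x;b_r)=H(x;b_r)$ with operative barrier $b=b_r\vee b^{**}$ for $b_r\le \hat b$ and $V(x;b_r)=G(x;b_r)$ with barrier $b=b_r\vee b^{*}$ for $b_r\ge \hat b$, while $V=G$ with $b=b_r\vee b^{*}$ for all $b_r$ in the reversed-inequality regime. The "independent of $b_r$" parts are then immediate: for $b_r\le b^{**}$ (resp. $b_r\le b^{*}$) the operative barrier equals the constant $b^{**}$ (resp. $b^{*}$), so $V$ does not depend on $b_r$. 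It remains to prove strict decrease above the threshold.

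The core step is to show $\partial_b G(x;b)<0$ for $b>b^{*}$ and $\partial_b H(x;b)<0$ for $b>b^{**}$ (for fixed $x>0$), since the map $b_r\mapsto b=b_r$ then transports this to strict decrease in $b_r$. I would argue this by differentiating the boundary value problems rather than the closed forms. For $x\le b$ both $G(\cdot;b)$ and $H(\cdot;b)$ solve the ODE \eqref{ODE}, so $w:=\partial_b G$ and $v:=\partial_b H$ solve the same homogeneous ODE in $x$ and are of the form $c_1e^{r_1x}+c_2e^{r_2x}$. Differentiating the boundary conditions \eqref{ODEcondnodiv1} gives $w(0)=0$ and $w'(b)=-G''(b)$, while differentiating \eqref{ODEcondnobank1} gives $v'(0)=0$ and $v'(b)=-H''(b)$. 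A direct computation gives $G''(b)=\tfrac{r_1^2e^{r_1b}-r_2^2e^{r_2b}}{r_1e^{r_1b}-r_2e^{r_2b}}$ and $H''(b)=\tfrac{1}{e^{r_1b}-e^{r_2b}}\bigl(r_1e^{r_1b}-r_2e^{r_2b}-k(r_1-r_2)e^{(r_1+r_2)b}\bigr)$; the sign analysis underlying \eqref{b*} shows $G''(b)>0$ for $b>b^{*}$, and the monotonicity of $b\mapsto r_1e^{-r_2b}-r_2e^{-r_1b}$ used to define $b^{**}$ in \eqref{b**} shows $H''(b)>0$ for $b>b^{**}$. Feeding these positive values into the boundary data forces $c_1,c_2$ to be negative, whence $w(x)<0$ and $v(x)<0$ for $x>0$. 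The case $x>b$ follows because there $G(x;b)=x-b+G(b;b)$ (and similarly for $H$), so its $b$-derivative equals $-1+G'(b)+w(b)=w(b)<0$ using $G'(b)=1$. Continuity of $V$ at $b_r=\hat b$, where the two representations agree by Corollary \ref{corr1}, then yields strict decrease across the whole range $b_r>b^{**}$ (resp. $b_r>b^{*}$).

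Finally, for \ref{cor2:2} I would note that for all sufficiently large $b_r$ one is in the regime $V(x;b_r)=G(x;b_r)$ with $b=b_r$, and use the explicit form \eqref{G-def}: for fixed $x$ and $b_r>x$ the numerator $e^{r_1x}-e^{r_2x}$ is constant while the denominator $r_1e^{r_1b_r}-r_2e^{r_2b_r}\to\infty$ as $b_r\to\infty$, so $V(x;b_r)\to 0$. The main obstacle is the strict-decrease computation for $H$; the envelope/ODE-differentiation argument above is what makes the sign of $\partial_b H$ transparent, reducing everything to the sign of $H''(b)$, which is exactly governed by the defining relation \eqref{b**} for $b^{**}$.
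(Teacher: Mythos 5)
Your proof is correct, and its skeleton matches the paper's: both reduce, via Corollary \ref{corr1} and the ordering $b^{**}\le b^{*}\le\hat b$, to showing that $G(x;\cdot)$ is constant below $b^{*}$ and strictly decreasing above it, and analogously for $H$ with $b^{**}$, and both obtain \ref{cor2:2} from the explicit form \eqref{G-def} with denominator $r_1e^{r_1b_r}-r_2e^{r_2b_r}\to\infty$. Where you genuinely diverge is in \emph{how} the strict decrease is established, and in the extra opening argument. The paper simply differentiates the closed forms \eqref{G-def} and \eqref{H-def} in $b_r$ and invokes \eqref{properties:r1r2} (``use differentiation''), leaving the computation implicit. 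You instead differentiate the boundary-value problem: $w=\partial_b G$ and $v=\partial_b H$ solve the homogeneous ODE \eqref{ODE}, with boundary data $w(0)=0$, $w'(b)=-G''(b^-)$ and $v'(0)=0$, $v'(b)=-H''(b^-)$, so everything reduces to the signs of $G''(b^-)$ and $H''(b^-)$ for $b>b^{*}$ resp.\ $b>b^{**}$ --- which is exactly the content of Lemma \ref{lem1} in the appendix. This envelope-style argument makes the mechanism transparent (the strict decrease of the value in the barrier is equivalent to strict convexity of the value function at the barrier, i.e.\ to being past the unconstrained optimum), whereas the paper's route is a brute-force computation; your version also recycles a lemma the paper already needs elsewhere. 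In addition, your nested-admissible-sets argument ($\mathcal A(x,b_r')\subseteq\mathcal A(x,b_r)$ for $b_r'\ge b_r$) gives the weak monotonicity in \ref{cor2:1} model-free, without any explicit formulas; the paper has no analogue of this, and it is a nice robustness check.

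Two small inaccuracies, neither fatal. First, for $w=\partial_b G$ the boundary condition $w(0)=0$ forces $c_2=-c_1$, so the two constants have \emph{opposite} signs (one gets $c_1<0$, $c_2>0$, and $w(x)=c_1(e^{r_1x}-e^{r_2x})<0$ for $x>0$); your claim that both constants are negative is only true for $v=\partial_b H$, where $v'(0)=0$ gives $c_2=-c_1r_1/r_2$ with $r_2<0$. The conclusion $w(x)<0$ for $x>0$ is unaffected. Second, the ordering $b^{*}\le\hat b$ is not in Lemma \ref{lemmaVR} itself; the paper obtains it in Case C of the proof of Theorem \ref{mainTHM} (Lemma \ref{lemmaVR} only gives $b^{**}\le b^{*}$ and $\hat b\ge b^{**}$), so you should cite that step as well.
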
 

\begin{rem} It is easy to show that the results in Corollary \ref{cor2} hold also in the case $x=0$, with the modification that $V(0;b_r)$ is only \emph{strictly} decreasing in the case
$V(0;b_r)>0$, i.e. when \eqref{lemma1:newcondition-k} holds with strict inequality and $b_r <\hat b$.  
\end{rem}

\begin{proof} (Theorem \ref{mainTHM}.) 
We remark that this proof relies neither on Theorem \ref{paulsen-thm} nor on Theorem \ref{newthm1}.

Let us first deal with the results about the bankruptcy time $\tau$. The result for \ref{mainTHMnew:1a} is trivial since $X$ is reflecting at both $b$ and $0$ in this case. The result for  \ref{mainTHMnew:1b} and \ref{mainTHMnew:2} is contained in \cite{wang2008moments}.

Now consider a function 
$g \in \mathcal C^1([0,\infty)) \cap \mathcal  C^2([0,b) \cup (b,\infty))$ for some $b>0$, an arbitrary strategy $(C,D)\in \mathcal A(x,b_r)$ and an arbitrary time $t>0$.  
Similarly to e.g. \cite[p. 60]{shreve1984optimal} and \cite[p. 959]{lokka2008optimal} we note 
that for the right-continous process $(X_{(\tau \wedge t)+})_{t\geq 0}$ (cf. Remark \ref{D-plus-remark}) holds, 
by the It\^{o}-Tanaka formula, that 
\begin{align} 
& e^{-\alpha \tau \wedge t}g(X_{(\tau \wedge t)+})\\
&= g(x)+ 
\int_0^{\tau \wedge t} e^{-\alpha s}\left(\mu g'(X_s) +\frac{1}{2} \sigma^2 g''(X_s)-\alpha g(X_s)\right)I_{\{X_s \neq b\}}ds \\
& \quad + \int_0^{\tau \wedge t} e^{-\alpha s} \sigma g'(X_s) dW_s 
+ \int_0^{\tau \wedge t} e^{-\alpha s} g'(X_{s})dC^c_s - \int_0^{\tau \wedge t} e^{-\alpha s} g'(X_{s}) dD^c_s\\
& \quad + \sum_{0\leq s\leq \tau \wedge t} e^{-\alpha s} (g(X_s + \Delta C_s)- g(X_{s}))
  + \sum_{0\leq s\leq \tau \wedge t} e^{-\alpha s} (g(X_s - \Delta D_s)- g(X_{s})).
\end{align}
The fundamental theorem of calculus gives
\begin{align}
 g(X_s + \Delta C_s)- g(X_{s})  
& =   \int_0^{\Delta C_s}g'(X_s+z)dz,\\ 
 g(X_s - \Delta D_s)- g(X_{s})
& = - \int_0^{\Delta D_s}g'(X_s-z)dz.
\end{align}
Now suppose $g$ is either  
the value function of the upper barrier strategy $\bar{D}^b$ with $b = b_r\vee b^{*}$, given by $G$ in \eqref{G-def}, or 
the value function of the double barrier strategy $(C^0,D^b)$ with $b = b_r\vee b^{**}$, given by $H$ in \eqref{H-def} --- the differentiability condition used above is directly verified in both cases.  
Then $g$ satisfies 
\eqref{ODE} and 
\eqref{ODEcondnodiv0} implying $g'(x) = 1$ for $x\geq b$ and $g''(x)=0$ for $x>b$.

These observations imply that
\begin{align} 
g(x) 
& = e^{-\alpha \tau \wedge t}g(X_{(\tau \wedge t)+}) -
\int_0^{\tau \wedge t} e^{-\alpha s}\left(\mu -\alpha g(X_s)\right)I_{\{X_s > b\}}ds \\
& \quad - \int_0^{\tau \wedge t} e^{-\alpha s} \sigma g'(X_s) dW_s 
- \int_0^{\tau \wedge t} e^{-\alpha s} g'(X_{s})dC^c_s + \int_0^{\tau \wedge t} e^{-\alpha s}g'(X_{s})dD^c_s\\
& \quad - \sum_{0\leq s\leq \tau \wedge t} e^{-\alpha s} \int_0^{\Delta C_s}g'(X_s+z)dz 
 + \sum_{0\leq s\leq \tau \wedge t} e^{-\alpha s} \int_0^{\Delta D_s}g'(X_s-z)dz.
\end{align} 
Lemma \ref{lem1} gives $\left(\mu -\alpha g(X_s)\right)I_{\{X_s > b\}}\leq 0$ and therefore
\begin{align} 
g(x) & \geq e^{-\alpha \tau \wedge t}g(X_{(\tau \wedge t)+}) - \int_0^{\tau \wedge t} e^{-\alpha s} \sigma g'(X_s) dW_s\\ 
&\quad +\int_0^{\tau \wedge t} e^{-\alpha s} g'(X_{s})dD^c_s + \sum_{0\leq s\leq \tau \wedge t} e^{-\alpha s} \int_0^{\Delta D_s}g'(X_s-z)dz\\
&\quad -\int_0^{\tau \wedge t} e^{-\alpha s} g'(X_{s})dC^c_s - \sum_{0\leq s\leq \tau \wedge t} e^{-\alpha s} \int_0^{\Delta C_s}g'(X_s+z)dz.
\end{align} 
Hence,
\begin{align} 
g(x) & \geq  \left(\int_0^{\tau \wedge t} e^{-\alpha s}dD_s- k \int_0^{\tau \wedge t} e^{-\alpha s}dC_s\right)
\label{mainTHM-pf3:eqN0}\\
&\quad +  e^{-\alpha \tau \wedge t}g(X_{(\tau \wedge t)+}) - \int_0^{\tau \wedge t} e^{-\alpha s} \sigma g'(X_s) dW_s 
\label{mainTHM-pf3:eqN1}\\ 
&\quad +\int_0^{\tau \wedge t} e^{-\alpha s} 
(g'(X_{s})-1)dD^c_s + \sum_{0\leq s\leq \tau \wedge t} e^{-\alpha s} \int_0^{\Delta D_s}(g'(X_s-z)-1)dz \label{mainTHM-pf3:eq3}\\
&\quad  + \int_0^{\tau \wedge t} e^{-\alpha s} 
(k-g'(X_{s}))dC^c_s + \sum_{0\leq s\leq \tau \wedge t} e^{-\alpha s} \int_0^{\Delta C_s}(k-g'(X_s+z))dz\label{mainTHM-pf3:eq4}.
\end{align} 
We conclude the proof by considering different cases. 
We rely on $G$ in \eqref{G-def} solving \eqref{ODE} on $[0,b)$, \eqref{ODEcondnodiv0} and \eqref{ODEcondnodiv1} with $b=b_r\vee b^{*}$
and $H$ in \eqref{H-def} solving \eqref{ODE} on $[0,b)$, \eqref{ODEcondnodiv0} and \eqref{ODEcondnobank1} with $b=b_r\vee b^{**}$, cf. Section \ref{Preliminaries}. 

\emph{Case A:} Consider the conditions of \ref{mainTHMnew:1a}. Suppose $g$ in inequality  \eqref{mainTHM-pf3:eqN0}--\eqref{mainTHM-pf3:eq4} is defined as $H$  in \eqref{H-def}. Suppose $b_r\leq b^{**}$ (i.e. $b=b^{**}$). Observe:
\begin{itemize}

\item  
$H'(x) = 1$ for $x\geq b^{**}$ and $H'(0)=k$.
Since $H'(x)>0$ for  $x \geq 0$ (Lemma \ref{lemmaVR}) and  $H''(b^{**}) = 0$ (easily verified) it follows from Lemma \ref{lem2} that $H''(x)<0$ for $x <b^{**}$.  Hence, $H'$ is non-increasing on $[0,b^{**}]$. It follows that $1 \leq H'(x) \leq k$ for $x\geq 0$. We conclude that the expressions \eqref{mainTHM-pf3:eq3} and \eqref{mainTHM-pf3:eq4} are non-negative.

\item Use Lemma \ref{lemmaVR} and $b_r\leq \hat b$ to find $H(0)\geq 0$ and $H'(x)>0$ for $x\geq 0$. Hence, $H(x)\geq 0$ for  $x \geq 0$. We conclude that the first term in \eqref{mainTHM-pf3:eqN1} is non-negative.
	\item If we send $t$ to infinity then the second term  in \eqref{mainTHM-pf3:eqN1} converges a.s. to a random variable with zero expectation (use that $H'(x)$ is a bounded function). 
\end{itemize}
Thus, sending $t$ to infinity ($\limsup$) in \eqref{mainTHM-pf3:eqN0}--\eqref{mainTHM-pf3:eq4} and taking expectation gives
\begin{align} 
H(x) \geq \E_x\left(
\limsup_{t\rightarrow \infty}\left(
\int_0^{\tau \wedge t} e^{-\alpha s} dD_s - k\int_0^{\tau \wedge t} e^{-\alpha s}dC_s
\right)\right).
\end{align}
Since $(C,D) \in \mathcal A (x,b_r)$ was arbitrarily chosen follows that \ref{mainTHMnew:1a} holds in the case $b=b^{**}$ (recalling that $H(x)$ is the value function attained by the strategy in \ref{mainTHMnew:1a}).

Now suppose $b_r>b^{**}$ (i.e. $b=b_r$). Observe:
\begin{itemize}
\item $H'(x)=1$ for $x\geq b_r$. 
The dividend payout condition \eqref{Admiss1} (see also Remark \ref{cap-req-rem}) therefore implies that the expressions in \eqref{mainTHM-pf3:eq3} vanish (either the derivatives are equal to one or there are no dividends).

\item $H'(0)=k$, $H'(x)>0$ and $\lim_{x\nearrow b_r}H''(x) > 0$ (Lemma \ref{lem1}) for $x\geq 0$. Using also Lemma \ref{lem2} (and regularity of $H$) we thus have:
$H'(0)=k$ and $H'(b_r)=1$ with $H'$ decreasing on an interval $[0,c)$ and increasing on $(c,b_r]$ (with $c$ determined by 
$H''(c)=0$). Hence, $H'(x)\leq k$ for $x\geq 0$ and the terms in \eqref{mainTHM-pf3:eq4} are non-negative.
\end{itemize} 
The terms in \eqref{mainTHM-pf3:eqN1} are dealt with in the same way as above. % 
Using the same limiting arguments as above we find \ref{mainTHMnew:1a} holds also in the case $b=b_r$.

\emph{Case B:} 
Consider the conditions of \ref{mainTHMnew:2}. Suppose $g$ in  \eqref{mainTHM-pf3:eqN0}--\eqref{mainTHM-pf3:eq4}  is defined as $G$ in \eqref{G-def}. Suppose $b_r\leq b^{*}$.
Observe:
\begin{itemize}
\item $G'(0)\leq k$, $G'(x)>0$ (Lemma \ref{lemmaVA}) and $G'(x) = 1$ for $x \geq b^*$. 
From $G''(b^{*}) = 0$ (easily verified) and Lemma \ref{lem2} follows $G''(x)<0$ for $x <b^{*}$. 
Hence, $G'$ is non-increasing on $[0,b^{*}]$. 
We conclude that $1 \leq G'(x) \leq k$ for  $x\geq 0$. 
%
%%%%%%%%%%%%%%%%%%%%%%%%%%%%
\item $G(0)=0$ (directly verified) and $G(x)\geq 0$ (by the item above).
\end{itemize}
Using arguments analogous to those above we find that \ref{mainTHMnew:2} holds in the case $b=b^*$. 
Now suppose $b_r>b^{*}$. Observe: 
\begin{itemize}
\item $G'(x)=1$ for $x\geq b_r$ and condition \eqref{Admiss1} imply that the expressions in \eqref{mainTHM-pf3:eq3} vanish (as above).
\item $G'(0)\leq k$, $G'(x)>0$ and $\lim_{x\nearrow b_r}G''(x) > 0$ (Lemma \ref{lem1}) for $x\geq 0$. Using arguments similar to those in the second part of Case A we find $G'(x)\leq k$. 
\item As above we find $G(x)\geq 0$.
\end{itemize}
The usual arguments now imply that \ref{mainTHMnew:2} holds also in the case $b=b_r$.

\emph{Case C:} We have left to prove \ref{mainTHMnew:1b}.  If $G'(0)\leq k$ holds also in this case then the result follows by the exact same arguments as in Case B. Thus, it is enough to show that $G'(0)\leq k$ for $b\geq \hat b$ when \eqref{lemma1:newcondition-k} holds. In \eqref{h-function} we defined the function $h$ by,
\begin{align} 
h(b) = \frac{r_1-r_2}{r_1e^{r_1b} -r_2e^{r_1b}} =  G'(0). 
\end{align}
Hence, $G'(0)= k$ when $b=\hat b$, by definition of $\hat b$ in \eqref{b-hat}. Thus, in order to prove that  $G'(0)\leq k$ for any $b\geq \hat b$ it is enough to show that $h(b)$ is non-increasing in $b$ for $b\geq \hat b$. But $h(b)$ is non-increasing exactly when $b\geq b^*$, as we saw in the proof of Lemma \ref{lemmaVA}. Hence, it is enough to prove that $\hat b \geq b^*$. The right side of \eqref{lemma1:newcondition-k} is equal to $h(b^*)$, cf. \eqref{help-1}. Use this, the definition of $\hat b$ in \eqref{b-hat}, and Lemma \ref{lemmaVR} to find 
$k\leq h(b^*),
k =   h(\hat b),
k\leq h(b^{**}).
$  
But since $h$ is maximal at $b^*$, see the proof of Lemma \ref{lemmaVA}, follows
\begin{align}
k=h(\hat b) \leq h(b^{**})\leq h(b^{*}). \label{help-oct17}
\end{align}
But since $\hat b \in [b^{**},\infty)$ by definition follows that the only possibility is $\hat b \geq b^{*}$; to see this use 
$b^{**}\leq b^{*}$ (Lemma \ref{lemmaVR}),
\eqref{help-oct17}, 
the fact that $h(b)$ is non-decreasing when $b\leq b^*$ and non-increasing when $b\geq b^*$. (Draw a picture).
\end{proof}

 \subsection{Graphical illustrations} \label{graphs-sec}
In this section we consider 
$\mu=0.04$, 
$\sigma^2=0.15$, 
$\alpha=0.05$ and 
$k=1.01$ for which condition \eqref{lemma1:newcondition-k} holds with strict inequality, 
$b^{**}\approx 0.17$, 
$b^* \approx  0.75$ and 
$\hat b \approx  1.58$.    

Recall that $H(x;b_r)$ is the optimal value function without the possibility of bankruptcy, 
$G(x;b_r)$ is the optimal value function without the possibility of capital injection 
and that the optimal value function when both bankruptcy and capital injection is allowed, i.e. $V(x;b_r)$, is for any fixed $b_r$ given by either 
$H(x;b_r)$ or $G(x;b_r)$ according to which is dominating the other, see Corollary \ref{corr1}. 

Figure \ref{fig-habit} illustrates Theorem \ref{mainTHM} and Corollary \ref{corr1} by showing how increasing the dividend payout barrier $b_r$ changes which of $H(x;b_r)$ and $G(x;b_r)$ is dominating. 
Figure \ref{fig-habit} also illustrates Corollary \ref{cor1.5}. 
Figure \ref{fig-habit2} illustrates Corollary \ref{cor2} by showing how $V(x;b_r) = H(x;b_r) \vee G(x;b_r)$ (cf. Corollary \ref{corr1}) decreases in $b_r$. Both figures illustrate that  $H(x;b_r)$ and $G(x;b_r)$ coincide when $b_r=\hat b$, cf. Theorem \ref{mainTHM}.

\begin{figure}[H]
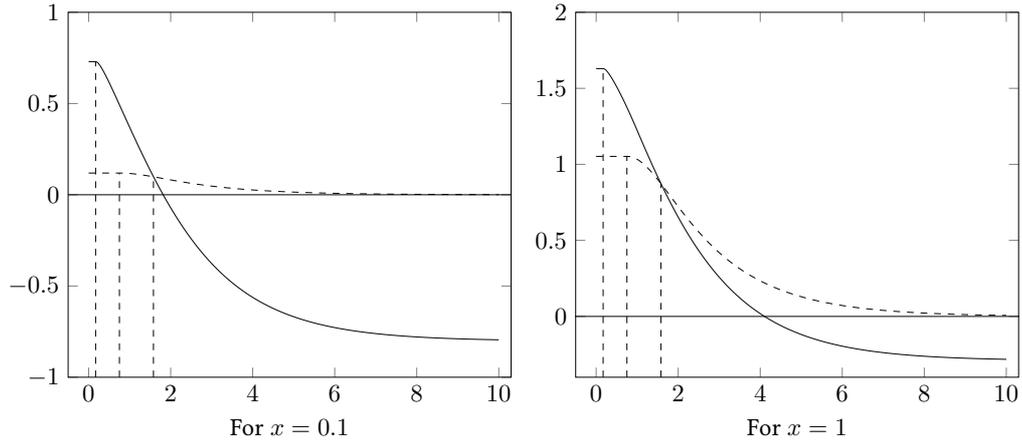

% [inline block 0: 6 envs, 67258 chars -> data_tex | \begin{tikzpicture}[scale=0.85]  \pgfmathsetmacro{\ra}{0.592273248} %r1...]


				\caption{
	$b_r\mapsto H(x;b_r)$ (solid) and $b_r\mapsto G(x;b_r)$ (dashed) for different values of initial surplus $x$. 
	The dashed vertical lines in each picture indicate $b^{**}$, $b^{*}$ and $\hat b$, in that order. 
	(Recall that the optimal value function is given by $V(x;b_r) = H(x;b_r) \vee G(x;b_r).)$
	}\label{fig-habit2}
\end{figure}

\section{Conclusions and future research} \label{sec-conclusions}
The main interpretation of the results in the present paper, in particular of items \ref{mainTHMnew:1a} and \ref{mainTHMnew:1b} in Theorem \ref{mainTHM}, see also Figure \ref{fig-habit} and Figure \ref{fig-habit2}, is that if the proportional cost of injecting capital $k$ is low, i.e. if \eqref{lemma1:newcondition-k} holds, then it is optimal to use a double barrier financing strategy and never allow the insurance company to go bankrupt as long as the dividend payout barrier $b_r$ is lower than the level $\hat b$, i.e. the optimal value function is given by $V(x;b_r)=H(x;b_r)$. 
However, if the dividend payout barrier $b_r$ is set higher than $\hat b$ then the optimal behavior switches to an upper barrier strategy that lets the insurance company go bankrupt the first time the surplus reaches zero, i.e. the optimal value function is given by $V(x;b_r)=G(x;b_r)$. 
Moreover, the interpretation of Corollary \ref{cor2}, see also Figure \ref{fig-habit2}, is that an increase in the dividend payout barrier decreases the optimal value function (i.e. the value of the insurance company), with the corresponding limit being zero. 
 
The main economic conclusion of the present paper is that regulation may have the, perhaps unforeseen, effect that if a profitable insurance company 
(corresponding to $\mu>0$) 
has access to a well-functioning financial market 
(corresponding to the proportional costs for capital injection $k$ satisfying \eqref{lemma1:newcondition-k})   
then its owners will inject capital when needed in case the market is unregulated or at least not too heavily regulated ($b_r\leq \hat b$). However, if the regulation is sufficiently heavy ($b_r>\hat b$) then the owners of the same insurance company will change their behavior; specifically, they will never inject capital and instead let the insurance company go bankrupt in the case of financial distress, i.e. in the case of zero surplus.

A potential topic for future research is the investigation of these conclusions for other models considering e.g.:
other surplus processes;   
other types of regulation; additional market frictions, for example fixed costs for capital injection; 
the presence of additional decision variables, for example reinsurance level and investment policy.

\section{Appendix} \label{appendix}

\begin{lem}\label{lem2} Suppose a function $g\in \mathcal C^2$ solves \eqref{ODE} on an interval $[a,b]$, with $g'(x)>0$ for  $x\in  [a,b]$.
\begin{enumerate}[label=(\Roman*)]
\item If $g''(x_0)=0$ for some $x_0 \in (a,b)$ then 
$g''(x) < 0$ for $x \in [a,x_0)$ and 
$g''(x) > 0$ for  $x \in (x_0,b]$.
\item If $g''(b)=0$, then $g''(x) < 0$ for $x \in [a,b)$.
\end{enumerate}
\end{lem}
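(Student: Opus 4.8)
The plan is to exploit the structure of the ODE \eqref{ODE} by differentiating it, so as to control the sign of $g''$ through the sign of $g'''$ at the zeros of $g''$. First I would note that, since $g\in\mathcal C^2$ solves \eqref{ODE}, the identity $\tfrac12\sigma^2 g''=\alpha g-\mu g'$ shows that $g''$ is itself $\mathcal C^1$ (its right-hand side is $\mathcal C^1$); hence $g\in\mathcal C^3$ and $g'''$ is continuous on $[a,b]$. Differentiating \eqref{ODE} then gives
\begin{align}
\tfrac12\sigma^2 g'''(x)=\alpha g'(x)-\mu g''(x),\qquad x\in[a,b].
\end{align}
The crucial observation is that at any point $x$ with $g''(x)=0$ this yields $\tfrac12\sigma^2 g'''(x)=\alpha g'(x)>0$, because $g'(x)>0$ by assumption and $\alpha,\sigma>0$. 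Thus $g'''(x)>0$ at every zero of $g''$.

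This \emph{up-crossing} property does the work. Since $g'''>0$ at each zero of $g''$, every such zero is isolated and $g''$ is strictly negative immediately to its left and strictly positive immediately to its right (by a first-order Taylor expansion of $g''$ around the zero). Being isolated, the zeros of $g''$ form a finite set in the compact interval $[a,b]$. I would then argue that there can be at most one of them: if $x_1<x_2$ were two consecutive zeros, then $g''$ would be strictly positive just to the right of $x_1$ and, having no zero in between, would keep that sign throughout $(x_1,x_2)$, so $g''$ would be positive just to the left of $x_2$ --- contradicting that $g''$ is negative immediately to the left of the zero $x_2$. Hence $g''$ has at most one zero on $[a,b]$.

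With this in hand both parts follow at once. For (I), the hypothesis supplies a zero $x_0\in(a,b)$, which by the previous paragraph is the unique zero of $g''$; since $g''$ is negative just left of $x_0$ and has no further zero on $[a,x_0)$, continuity (the intermediate value theorem) forces $g''<0$ on all of $[a,x_0)$, and symmetrically $g''>0$ on $(x_0,b]$. For (II), the hypothesis $g''(b)=0$ makes $b$ the unique zero (the third-derivative computation remains valid at the endpoint since $g'''$ is continuous on $[a,b]$), so $g''$ has no zero on $[a,b)$ and is negative immediately to the left of $b$; constancy of sign then gives $g''<0$ on $[a,b)$.

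I expect the only genuinely delicate step to be the passage from the local up-crossing property to the global claim that $g''$ has at most one zero; the cleanest route is the consecutive-zeros contradiction above, which relies on the zeros being isolated. An equivalent alternative would be to examine an interior maximum $c$ of $g''$ between two putative zeros and derive $g^{(4)}(c)>0$ from a further differentiation of \eqref{ODE}, contradicting the second-order condition for a maximum; I would keep the consecutive-zeros version, since it uses only $g'''$.
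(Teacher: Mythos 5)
Your proof is correct, and it is complete; the route differs from the paper's mainly in that you do the work the paper delegates to a citation. The paper's own proof is two lines: since \eqref{ODE} has constant coefficients, $g'$ solves the same equation, and then \cite[Lemma 4.2(b)]{shreve1984optimal} is invoked to conclude that $g''(x_0)=0$ forces $(x-x_0)g'(x)g''(x)>0$ for $x\neq x_0$, which together with $g'>0$ is exactly the conclusion of both items. Your argument rests on the same underlying mechanism --- differentiating \eqref{ODE} is the same as observing that $g'$ solves it, and your identity $\tfrac12\sigma^2 g'''=\alpha g'-\mu g''$, evaluated at a zero of $g''$, is precisely what drives the cited lemma --- but you then prove the global sign statement yourself: the up-crossing property at every zero of $g''$, the isolation (hence finiteness) of the zeros, and the consecutive-zeros contradiction showing there is at most one zero, from which (I) and (II) follow by sign constancy on zero-free intervals. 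What your version buys is self-containedness (no reliance on the external reference, and your bootstrap $g\in\mathcal C^3$ makes the regularity needed for the third-derivative argument explicit, a point the paper leaves implicit inside the citation); what the paper's version buys is brevity. Logically the two proofs have the same content, and yours would stand as a valid replacement.
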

\begin{proof} If $g$ solves \eqref{ODE} then so does $g'$. Thus, if $g''(x_0)=0$ for some $x_0 \in [a,b]$, then
$(x-x_0)g'(x)g''(x)>0$ for $x\in [a,b],x\neq x_0$, by \cite[Lemma 4.2(b)]{shreve1984optimal}.
The assertions follow.  
\end{proof}

\begin{lem} \label{lem1}  
For $G$ defined in \eqref{G-def} 
and $b=b_r\vee b^{*}$, holds,
\begin{align} 
\left(\mu -\alpha G(x)\right)I_{\{x> b\}} &\leq 0, \enskip \mbox{ for  $x\geq 0$.}\label{lemma1:eq0}
\end{align} 
Moreover,
\begin{align} 
\mbox{if $b_r>b^*$ then } \enskip \lim_{ x \nearrow b }G''(x) &>	0.\label{lemma1:eq1}
\end{align} 
The results also hold for $H$ in \eqref{H-def} when $b^*$ is replaced with $b^{**}$.
\end{lem}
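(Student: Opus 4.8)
The plan is to prove the two assertions about $G$ separately, then observe that the argument for $H$ is identical after swapping $b^{*}$ for $b^{**}$.

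First I would establish \eqref{lemma1:eq0}. The indicator is nonzero only for $x > b$, so the claim is that $\alpha G(x) \geq \mu$ there. On $\{x \geq b\}$ the function $G$ takes the linear form $G(x) = x - b + G(b)$ from \eqref{G-def} (recalling $b = b_r \vee b^{*}$), so $\alpha G(x)$ is increasing in $x$ on this region. Hence it suffices to check the inequality at the single point $x = b$, i.e. to show $\alpha G(b) \geq \mu$. Since $G$ solves the ODE \eqref{ODE} on $[0,b)$ and is $\mathcal C^1$ with $G'(b) = 1$, taking the limit $x \nearrow b$ in \eqref{ODE} gives
\begin{align}
\alpha G(b) = \mu G'(b) + \tfrac{1}{2}\sigma^2 \lim_{x \nearrow b} G''(x) = \mu + \tfrac{1}{2}\sigma^2 \lim_{x \nearrow b} G''(x).
\end{align}
Thus $\alpha G(b) \geq \mu$ is equivalent to $\lim_{x \nearrow b} G''(x) \geq 0$, and I would reduce \eqref{lemma1:eq0} to controlling the sign of the left second derivative at $b$. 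This is where the two cases $b = b^{*}$ and $b > b^{*}$ split.

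In the case $b_r \leq b^{*}$ (so $b = b^{*}$) one has $G''(b^{*}) = 0$ by the defining boundary condition \eqref{ODEcondnodiv2}, giving $\alpha G(b) = \mu$ and hence \eqref{lemma1:eq0} with equality at the boundary. In the case $b_r > b^{*}$ (so $b = b_r > b^{*}$), I need the strict inequality $\lim_{x \nearrow b} G''(x) > 0$, which is exactly assertion \eqref{lemma1:eq1}; this simultaneously finishes \eqref{lemma1:eq0} in the remaining subcase. To prove \eqref{lemma1:eq1} I would use Lemma \ref{lem2}: $G$ solves \eqref{ODE} on $[0,b]$ with $G'(x) > 0$ throughout (Lemma \ref{lemmaVA}\ref{lemmaVA:item1}). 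The key point is that $G''$ cannot vanish at the right endpoint $b = b_r > b^{*}$. I would argue this from the explicit form: on $[0,b)$ we have $G(x) = (e^{r_1 x} - e^{r_2 x})/(r_1 e^{r_1 b} - r_2 e^{r_2 b})$, so $G''(x) = (r_1^2 e^{r_1 x} - r_2^2 e^{r_2 x})/(r_1 e^{r_1 b} - r_2 e^{r_2 b})$, and therefore
\begin{align}
\lim_{x \nearrow b} G''(x) = \frac{r_1^2 e^{r_1 b} - r_2^2 e^{r_2 b}}{r_1 e^{r_1 b} - r_2 e^{r_2 b}}.
\end{align}
The denominator is positive (it is $1/h(b)$ with $h > 0$), and the numerator $r_1^2 e^{r_1 b} - r_2^2 e^{r_2 b}$ is precisely the derivative of the denominator $r_1 e^{r_1 b} - r_2 e^{r_2 b}$ in $b$, which was shown in the proof of Lemma \ref{lemmaVA} to be strictly positive for $b > b^{*}$. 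Hence $\lim_{x \nearrow b} G''(x) > 0$, proving \eqref{lemma1:eq1}.

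The main obstacle, such as it is, lies in keeping the two boundary cases cleanly separated and in correctly identifying that the sign of the numerator $r_1^2 e^{r_1 b} - r_2^2 e^{r_2 b}$ is governed by the same monotonicity analysis of the function $h$ from Lemma \ref{lemmaVA}; once that link is made the calculation is routine via \eqref{properties:r1r2}. Finally, for $H$ in \eqref{H-def} the identical reduction applies: $H$ solves \eqref{ODE} on $[0,b)$ with $b = b_r \vee b^{**}$, is $\mathcal C^1$ with $H'(b) = 1$, satisfies $H''(b^{**}) = 0$, and $H'(x) > 0$ by Lemma \ref{lemmaVR}\ref{lemma1:new:item1.5}, so every step goes through verbatim with $b^{*}$ replaced by $b^{**}$ and the role of the denominator's monotonicity played by the corresponding computation for $H$.
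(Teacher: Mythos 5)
Your handling of $G$ is correct and is essentially the paper's own argument: you reduce \eqref{lemma1:eq0} to the sign of $\lim_{x\nearrow b}G''(x)$ by exploiting the linear form of $G$ beyond $b$ together with the identity $\tfrac12\sigma^2\lim_{x\nearrow b}G''(x)=\alpha G(b)-\mu$, and you prove \eqref{lemma1:eq1} from the explicit formula
\begin{align}
\lim_{x\nearrow b}G''(x)=\frac{r_1^2e^{r_1b}-r_2^2e^{r_2b}}{r_1e^{r_1b}-r_2e^{r_2b}},
\end{align}
whose numerator is the $b$-derivative of the denominator and is strictly positive for $b>b^*$ by the monotonicity analysis in the proof of Lemma \ref{lemmaVA}. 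The paper does exactly this (in the reverse order: \eqref{lemma1:eq1} first, then \eqref{lemma1:eq0}).

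The gap is your final claim that the $H$ case goes through \emph{verbatim}. It does not, because the one structural fact your $G$ argument rests on --- that the numerator of the limiting second derivative equals the $b$-derivative of the denominator, so that its sign is controlled by the function $h$ from Lemma \ref{lemmaVA} --- fails for $H$. From \eqref{H-def} one gets
\begin{align}
\lim_{x\nearrow b}H''(x)=\frac{r_1(1-ke^{r_2b})e^{r_1b}-r_2(1-ke^{r_1b})e^{r_2b}}{e^{r_1b}-e^{r_2b}},
\end{align}
and the numerator now involves $k$ and is not the derivative of $e^{r_1b}-e^{r_2b}$. Its positivity for $b>b^{**}$ requires a separate argument: expanding, the numerator is positive if and only if $r_1e^{-r_2b}-r_2e^{-r_1b}>k(r_1-r_2)$, which holds with equality at $b=b^{**}$ precisely by the defining equation \eqref{b**}, and then holds strictly for $b>b^{**}$ because $b\mapsto r_1e^{-r_2b}-r_2e^{-r_1b}$ is strictly increasing (differentiate and use \eqref{properties:r1r2}). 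This computation, which occupies roughly half of the paper's proof, is what your proposal omits. Note also that the fact $H''(b^{**})=0$ you invoke concerns the function with barrier parameter $b=b^{**}$; it gives no information about $H(\,\cdot\,;b_r)$ for $b_r>b^{**}$, which is a different function, so it cannot stand in for the missing sign analysis. Your reduction of \eqref{lemma1:eq0} to the sign of the left second derivative at $b$ does carry over to $H$ unchanged; only the proof of \eqref{lemma1:eq1} for $H$ needs to be supplied as above.
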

\begin{proof} We directly find
\begin{align} 
\lim_{ x \nearrow b}G''(x) = \frac{r_1^2e^{r_1 b} -r_2^2e^{r_2 b}}{r_1e^{r_1b}-r_2e^{r_2b}}. \label{lem2:pfeq1}
\end{align} 
The numerator in \eqref{lem2:pfeq1} is (as we have seen) strictly positive when $b>b^*$, by definition of $b^*$, see \eqref{b*}. This  proves \eqref{lemma1:eq1}. We also find
\begin{align} 
\lim_{ x \nearrow b}H''(x) = \frac{1}{e^{r_1b}-e^{r_2b}}
\left( r_1(1-ke^{r_2b})e^{r_1 b} -  r_2(1-ke^{r_1b})e^{r_2 b} \right).
\end{align}
Hence, if
\begin{align} 
r_1(1-ke^{r_2b})e^{r_1 b} -  r_2(1-ke^{r_1b})e^{r_2 b}>0, \label{lem2:pfeq2}
\end{align} 
for $b>b^{**}$, then \eqref{lemma1:eq1} holds also for $H$ when replacing $b^{*}$ with $b^{**}$. The inequality \eqref{lem2:pfeq2} is equivalent to
\begin{align} 
r_1(1-ke^{r_2b})e^{r_1 b} -  r_2(1-ke^{r_1b})e^{r_2 b}>0  
& \Leftrightarrow r_1e^{r_1 b} -r_2e^{r_2 b}> k(r_1-r_2)e^{(r_1+r_2)b} \label{appen-b**}\\ 
& \Leftrightarrow r_1e^{-r_2 b} -r_2e^{-r_1 b}> k(r_1-r_2).
\end{align}  
Now, the definition of $b^{**}$ is that the last inequality is an equality when $b=b^{**}$. Hence, if we can show that $r_1e^{-r_2 b} -r_2e^{-r_1 b}$ is (strictly) increasing in $b$ for $b>b^{**}$, then \eqref{lem2:pfeq2} is satisfied for $b>b^{**}$; but this is easily verified using the derivative and \eqref{properties:r1r2}.
We have thus proved \eqref{lemma1:eq1} also for $H$ and $b^{**}$.

Let us prove \eqref{lemma1:eq0}, the same arguments also work for $H$ and $b^{**}$. Since $G$ satisfies \eqref{ODEcondnodiv0} follows,
\begin{align} 
\left(\mu -\alpha G(x)\right)I_{\{x> b\}} & = \left(\mu -\alpha (x-b+G(b))\right)I_{\{x> b\}}\\
& \leq \left(\mu -\alpha G(b)\right)I_{\{x> b\}}.  \label{lem2:pfeq0}
\end{align} 
$G$ also satisfies \eqref{ODE} and $G'(b)=1$. 
Thus, by continuity $\frac{1}{2} \sigma^2 \lim_{ x \nearrow b}G''(x)=\alpha G(b)-\mu$. 
If $b=b^*$ then $\lim_{ x \nearrow b}G''(x)=0$ (to see this use the definition $b^*$ and \eqref{lem2:pfeq1}) and hence $\mu -\alpha G(b)=0$. 
Moreover, if $b>b^*$ follows from \eqref{lemma1:eq1} that $\mu -\alpha G(b)\leq 0 $. Using this in \eqref{lem2:pfeq0} implies that \eqref{lemma1:eq0} holds. 
\end{proof}  

\textbf{Proof of Lemma \ref{lemmaVR}:}
We use \eqref{properties:r1r2} repeatedly.
\ref{lemma1:new:item1.5} is directly verified.

\emph{Proof of \ref{lemma1:new:item1}.} Evaluating \eqref{H-def} at $x=0$ and requiring non-negativity gives
\begin{align} 
\frac{1}{e^{r_1b}-e^{r_2b}} \left( \frac{1-ke^{r_2b}}{r_1}  -\frac{1-ke^{r_1b}}{r_2} \right) \geq 0.
\end{align}
Now simplify. The other case is analogous.

\emph{Proof of \ref{lemma1:new:item2.5}.}
We only prove the first statement (the proof of the second is analogous). The definition of $b^{**}$ in \eqref{b**} is equivalent to,
\begin{align} 
\frac{r_1}{r_2} = \frac{1-ke^{r_1b^{**}}}{1-ke^{r_2b^{**}}}e^{b^{**}(r_2-r_1)} \Leftrightarrow
\frac{1-ke^{r_2b^{**}}}{1-ke^{r_1b^{**}}}=e^{b^{**}(r_2-r_1)}\frac{r_2}{r_1}.
\end{align}  
Now, \eqref{newthm3:cond} is equivalent to 
\begin{align} 
\frac{r_1}{r_2} \geq \frac{1-ke^{r_2b^{**}}}{1-ke^{r_1b^{**}}}.
\end{align}
Thus, \eqref{newthm3:cond} is equivalent to as 
$
\frac{r_1}{r_2} \geq e^{b^{**}(r_2-r_1)}\frac{r_2}{r_1}
$ 
which is equivalent to,
\begin{align} 
\frac{r_1^2}{r_2^2} \leq e^{b^{**}(r_2-r_1)}.\label{lemmaVR0} 
\end{align} 
Solving for $b^{**}$ gives $b^{**} \leq \log(r_2^2/r_1^2)/(r_1-r_2)=b^*$ (cf. \eqref{b*}) and the second equivalence is thus proved. 

To see that \eqref{lemma1:newcondition-k} is equivalent to $b^{**} \leq b^*$ first note that \eqref{lemma1:newcondition-k} holds with equality exactly when $b^*=b^{**}$; to see this note that 
$k = \frac{r_1-r_2}{r_1e^{r_1b^*}-r_2e^{r_2b^*}}$ 
(i.e. \eqref{lemma1:newcondition-k} holds with equality)  
if and only if 
$r_1e^{-r_2b^*}-r_2e^{-r_1b^*} = k(r_1-r_2)$
(i.e. $b^*=b^{**}$, cf. \eqref{b**}), 
which with some effort can be verified by solving for $k$ and using the definition in \eqref{b*}. 
Second, if $k$ decreases then $b^{**}$ decreases; to see this note that the derivative of the left side of \eqref{b**} with respect to $b^{**}$ is positive. It follows that \eqref{lemma1:newcondition-k} is equivalent to $b^{**}\leq b^*$. 

\emph{Proof of \ref{lemma1:new:item2}.} Again we only prove the first statement. In the case $b_r \leq b^{**}$ (i.e. with $b=b^{**}$) holds that $H(0) \geq 0$  is equivalent to \eqref{newthm3:cond}, by item \ref{lemma1:new:item1}. Hence, the result follows from \ref{lemma1:new:item2.5}.

\emph{Proof of \ref{lemma1:new:item4}:} 
By \ref{lemma1:new:item2.5} holds,
\begin{align}
r_1e^{r_1  b^{**}} - r_2e^{r_2 b^{**}} \geq \frac{r_1-r_2}{k}.  
\end{align}

Thus, in the case $b=b^{**}$ (i.e. $b_r\leq b^{**}$) follows, from \ref{lemma1:new:item1}, that $H(0)\leq 0$. Now, if we can prove that $r_1e^{r_1  b} - r_2e^{r_2 b}$ is non-decreasing in $b$, for $b \geq b^{**}$, then follows, from \ref{lemma1:new:item1} that  $H(0)\leq 0 $ also in the case $b_r > b^{**}$ and we are done. Hence, it is enough to show that its derivative, 
$r^2_1e^{r_1b}-r^2_2e^{r_2b}$, is non-negative for $b\geq b^{**}$. But $r^2_1e^{r_1b}-r^2_2e^{r_2b}\geq 0$ is equivalent to $b\geq b^*$(as we have seen) and since $b^{**}\geq b^*$ (by \ref{lemma1:new:item2.5}) follows therefore that $r_1e^{r_1  b} - r_2e^{r_2 b}$ is non-decreasing in $b$, for $b \geq b^{**}$.

\emph{Proof of \ref{lemma1:new:item3}.} \ref{lemma1:new:item2.5} gives 
\begin{align} \label{hhh}
r_1e^{r_1b}-r_2e^{r_2b} \leq  \frac{r_1-r_2}{k} \enskip\mbox{ for } b = b^{**}.
\end{align}
From the proof of Lemma \ref{lemmaVA} we know
$r_1e^{r_1  b} - r_2e^{r_2  b}$ is 
(strictly) increasing in $b$ for $b>b^*$ and 
(strictly) decreasing in $b$ for $b<b^*$; 
moreover, the left side of \eqref{hhh} clearly converges to $\infty$ as $b\rightarrow  \infty$. 
Hence, there exists a unique constant $\hat b \in [b^{**},\infty)$ such that
\begin{align} 
r_1e^{r_1 b}-r_2e^{r_2b} \leq  \frac{r_1-r_2}{k} \mbox{ for } b \leq \hat b, \enskip \mbox{ and } \enskip
r_1e^{r_1 b}-r_2e^{r_2b} \geq  \frac{r_1-r_2}{k} \mbox{ for } b \geq \hat b.
\end{align} 
The result follows from \ref{lemma1:new:item1}.

\textbf{Proof of Corollary \ref{cor1.5}:}
The result is easy to show using the following observations: 

(i) $H(0; \hat b)= G(0; \hat b)=0$ under condition \eqref{lemma1:newcondition-k} (cf. Corollary \ref{corr1}), 

(ii) $G(0; b_r) = 0$ for all $b_r$ (cf. \eqref{G-def}),

(iii) $\hat b>b^{**}$ when \eqref{lemma1:newcondition-k} holds with strict inequality (follows from a direct modification of Lemma \ref{lemmaVR} based on strict inequalities),

(iv) $H(0; b_r)$ is strictly decreasing in $b_r$ when $b_r>b^{**}$ 
(use differentiation, \eqref{properties:r1r2}, the definition of $b^*$ and arguments from the proof of Theorem \ref{newthm1}).

(v) Corollary \ref{corr1}.

\textbf{Proof of Corollary \ref{cor2}:}
From Corollary \ref{corr1} follows that $V(x;b_r) = G(x;b_r)$ for $b_r\geq \hat b$. 
Using \eqref{G-def} and \eqref{properties:r1r2} we directly obtain \ref{cor2:2}.

Let us prove \ref{cor2:1}, \ref{cor2:1a} and \ref{cor2:1b}. 
By Corollary \ref{corr1} and the fact that $b^{**} \leq  b^* \leq \hat b$ when condition \eqref{lemma1:newcondition-k} holds 
(which follows from \ref{lemma1:new:item2.5} in Lemma \ref{lemmaVR} and \emph{Case C} in the proof of Theorem \ref{mainTHM}) 
it suffices to show that for each fixed $x>0$ holds: 

(i)  $G(x;b_r)$ is independent of $b_r$ for $b_r\leq b^*$ and strictly decreasing in $b_r$ for $b_r>b^*$, and 

(ii) $H(x;b_r)$ is independent of $b_r$ for $b_r\leq b^{**}$ and strictly decreasing in $b_r$ for $b_r>b^{**}$. 
 
From \eqref{G-def} we directly see that $G(x;b_r)$ does not depend on $b_r$ for $b_r<b^*$. For $b_r>b^*$ and $0<x<b_r$ it is easy to show that $G(x;b_r)$ is strictly decreasing in $b_r$ (use differentiation and \eqref{properties:r1r2}). This also holds for $b_r>b^*$ and $x>b_r$. Hence, (i) follows from the continuity of $G(x;b_r)$. 
Item (ii) is proved analogously.

\bibliographystyle{abbrv}
\bibliography{kristofferBibl}

\begin{thebibliography}{10}

\bibitem{albrecher2009optimality}
H.~Albrecher and S.~Thonhauser.
\newblock Optimality results for dividend problems in insurance.
\newblock {\em RACSAM-Revista de la Real Academia de Ciencias Exactas, Fisicas
  y Naturales. Serie A. Matematicas}, 103(2):295--320, 2009.

\bibitem{alvarez2018class}
L.~H.~R. Alvarez.
\newblock A class of solvable stationary singular stochastic control problems.
\newblock {\em arXiv:1803.03464}, 2018.

\bibitem{asmussen1997controlled}
S.~Asmussen and M.~Taksar.
\newblock Controlled diffusion models for optimal dividend pay-out.
\newblock {\em Insurance: Mathematics and Economics}, 20(1):1--15, 1997.

\bibitem{avanzi2009strategies}
B.~Avanzi.
\newblock Strategies for dividend distribution: a review.
\newblock {\em North American Actuarial Journal}, 13(2):217--251, 2009.

\bibitem{avanzi2011optimal}
B.~Avanzi, J.~Shen, and B.~Wong.
\newblock Optimal dividends and capital injections in the dual model with
  diffusion.
\newblock {\em ASTIN Bulletin: The Journal of the IAA}, 41(2):611--644, 2011.

\bibitem{avram2007optimal}
F.~Avram, Z.~Palmowski, M.~R. Pistorius, et~al.
\newblock On the optimal dividend problem for a spectrally negative {L}{\'e}vy
  process.
\newblock {\em The Annals of Applied Probability}, 17(1):156--180, 2007.

\bibitem{bai2012optimal}
L.~Bai, M.~Hunting, and J.~Paulsen.
\newblock Optimal dividend policies for a class of growth-restricted diffusion
  processes under transaction costs and solvency constraints.
\newblock {\em Finance and Stochastics}, 16(3):477--511, 2012.

\bibitem{tomas-continpubl}
T.~Bj{\"o}rk, M.~Khapko, and A.~Murgoci.
\newblock On time-inconsistent stochastic control in continuous time.
\newblock {\em Finance and Stochastics}, 21(2):331--360, 2017.

\bibitem{chen2016optimal}
S.~Chen, X.~Wang, Y.~Deng, and Y.~Zeng.
\newblock Optimal dividend-financing strategies in a dual risk model with
  time-inconsistent preferences.
\newblock {\em Insurance: Mathematics and Economics}, 67:27--37, 2016.

\bibitem{christensen2017finding}
S.~Christensen and K.~Lindensj{\"o}.
\newblock On finding equilibrium stopping times for time-inconsistent markovian
  problems.
\newblock {\em SIAM Journal on Control and Optimization}, 56(6):4228--4255,
  2018.

\bibitem{christensen2018time}
S.~Christensen and K.~Lindensj{\"o}.
\newblock On time-inconsistent stopping problems and mixed strategy stopping
  times.
\newblock {\em arXiv:1804.07018}, 2018.

\bibitem{dai2010optimal}
H.~Dai, Z.~Liu, and N.~Luan.
\newblock Optimal dividend strategies in a dual model with capital injections.
\newblock {\em Mathematical Methods of Operations Research}, 72(1):129--143,
  2010.

\bibitem{de2017dividend}
T.~De~Angelis and E.~Ekstr{\"o}m.
\newblock The dividend problem with a finite horizon.
\newblock {\em The Annals of Applied Probability}, 27(6):3525--3546, 2017.

\bibitem{grandits2013optimal}
P.~Grandits.
\newblock Optimal consumption in a brownian model with absorption and finite
  time horizon.
\newblock {\em Applied Mathematics \& Optimization}, 67(2):197--241, 2013.

\bibitem{he2008optimal}
L.~He, P.~Hou, and Z.~Liang.
\newblock Optimal control of the insurance company with proportional
  reinsurance policy under solvency constraints.
\newblock {\em Insurance: Mathematics and Economics}, 43(3):474--479, 2008.

\bibitem{he2009optimal}
L.~He and Z.~Liang.
\newblock Optimal financing and dividend control of the insurance company with
  fixed and proportional transaction costs.
\newblock {\em Insurance: Mathematics and Economics}, 44(1):88--94, 2009.

\bibitem{Karatzas2}
I.~Karatzas and S.~E. Shreve.
\newblock {\em Brownian Motion and Stochastic Calculus (Graduate Texts in
  Mathematics), 2nd edition}.
\newblock Springer, 1991.

\bibitem{kulenko2008optimal}
N.~Kulenko and H.~Schmidli.
\newblock Optimal dividend strategies in a {Cram{\'e}r}--{Lundberg} model with
  capital injections.
\newblock {\em Insurance: Mathematics and Economics}, 43(2):270--278, 2008.

\bibitem{lindensjo2017timeinconHJB}
K.~Lindensj{\"o}.
\newblock A regular equilibrium solves the extended {HJB} system.
\newblock arXiv:1611.02902, 2018.

\bibitem{lokka2008optimal}
A.~L{\o}kka and M.~Zervos.
\newblock Optimal dividend and issuance of equity policies in the presence of
  proportional costs.
\newblock {\em Insurance: Mathematics and Economics}, 42(3):954--961, 2008.

\bibitem{paulsen2003optimal}
J.~Paulsen.
\newblock Optimal dividend payouts for diffusions with solvency constraints.
\newblock {\em Finance and Stochastics}, 7(4):457--473, 2003.

\bibitem{paulsen2008optimal}
J.~Paulsen.
\newblock Optimal dividend payments and reinvestments of diffusion processes
  with both fixed and proportional costs.
\newblock {\em SIAM Journal on Control and Optimization}, 47(5):2201--2226,
  2008.

\bibitem{peng2012optimal}
X.~Peng, M.~Chen, and J.~Guo.
\newblock Optimal dividend and equity issuance problem with proportional and
  fixed transaction costs.
\newblock {\em Insurance: Mathematics and Economics}, 51(3):576--585, 2012.

\bibitem{pilipenko2014introduction}
A.~Pilipenko.
\newblock {\em An introduction to stochastic differential equations with
  reflection}, volume~1.
\newblock Universit{\"a}tsverlag Potsdam, 2014.

\bibitem{schmidli2017capital}
H.~Schmidli.
\newblock On capital injections and dividends with tax in a diffusion
  approximation.
\newblock {\em Scandinavian Actuarial Journal}, 2017(9):751--760, 2017.

\bibitem{sethi2002optimal}
S.~P. Sethi and M.~I. Taksar.
\newblock Optimal financing of a corporation subject to random returns.
\newblock {\em Mathematical Finance}, 12(2):155--172, 2002.

\bibitem{shreve1984optimal}
S.~E. Shreve, J.~P. Lehoczky, and D.~P. Gaver.
\newblock Optimal consumption for general diffusions with absorbing and
  reflecting barriers.
\newblock {\em SIAM Journal on Control and Optimization}, 22(1):55--75, 1984.

\bibitem{taksar2000optimal}
M.~I. Taksar.
\newblock Optimal risk and dividend distribution control models for an
  insurance company.
\newblock {\em Mathematical methods of operations research}, 51(1):1--42, 2000.

\bibitem{wang2008moments}
H.~Wang and C.~Yin.
\newblock Moments of the first passage time of one-dimensional diffusion with
  two-sided barriers.
\newblock {\em Statistics \& Probability Letters}, 78(18):3373--3380, 2008.

\bibitem{yao2011optimal}
D.~Yao, H.~Yang, and R.~Wang.
\newblock Optimal dividend and capital injection problem in the dual model with
  proportional and fixed transaction costs.
\newblock {\em European Journal of Operational Research}, 211(3):568--576,
  2011.

\bibitem{zhang2010optimal}
S.~Zhang, G.~Liu, and Y.~Li.
\newblock Optimal dividend payments in classical risk model with capital
  injections and solvency constraints.
\newblock In {\em Information Science and Engineering (ICISE), 2010 2nd
  International Conference on}, pages 2947--2950. IEEE, 2010.

\bibitem{zhou2012optimal}
M.~Zhou and K.~C. Yuen.
\newblock Optimal reinsurance and dividend for a diffusion model with capital
  injection: Variance premium principle.
\newblock {\em Economic Modelling}, 29(2):198--207, 2012.

\bibitem{zhu2016optimal}
J.~Zhu and H.~Yang.
\newblock Optimal capital injection and dividend distribution for growth
  restricted diffusion models with bankruptcy.
\newblock {\em Insurance: Mathematics and Economics}, 70:259--271, 2016.

\end{thebibliography}

\end{document}